\documentclass[12pt, reqno]{amsart}
\usepackage{enumitem}
\usepackage{amssymb}
\usepackage{tikz-network}
\usepackage{adjustbox}
\usepackage{graphicx}
\graphicspath{ {./Images/} }
\usepackage{float}
\usepackage{hyperref}
\usepackage{cleveref}
 \usepackage{tikz}
 \pgfdeclarelayer{nodelayer}
 \pgfdeclarelayer{edgelayer}
 \pgfsetlayers{edgelayer,nodelayer,main}
 \usetikzlibrary{}
 \tikzstyle{Black Vertex}=[fill=black, draw=black, shape=circle]
  \tikzstyle{none}=[fill=none, draw=none, shape=circle]
  \tikzstyle{Green Edge}=[-, fill=none, draw=green]
\tikzstyle{Red Edge}=[-, fill=none, draw=red]
\tikzstyle{Blue Edge}=[-, draw=blue]
\tikzstyle{Orange Edge}=[-, draw={rgb,255: red,255; green,128; blue,0}]
\tikzstyle{Dashed Edge}=[dashed]
\tikzstyle{Outline}=[fill=white, draw=black, shape=rectangle]
\tikzstyle{Black Edge}=[-]
\newtheorem{theorem}{Theorem}[section]
\newtheorem{lemma}[theorem]{Lemma}
\newtheorem{corollary}[theorem]{Corollary}
\newtheorem{proposition}[theorem]{Proposition}

\newtheorem{sublemma}{}[theorem]

\theoremstyle{definition}

\theoremstyle{remark}

\numberwithin{equation}{section}
\newcommand{\ba}{\backslash}

\newcommand{\si}{\rm{si}}
\newcommand{\co}{\rm{co}}

\begin{document}

\title{On $k$-connected Sets in Matroids}

\author{Sean Boros, James Oxley}
\address{Mathematics Department\\
  Louisiana State University\\
  Baton Rouge, Louisiana}
\email{sboros1@lsu.edu, oxley@math.lsu.edu}
\keywords{matroid connectivity, $k$-connected set, matroid tangle}

\subjclass[2020]{05B35}
\date{\today}

\begin{abstract}
   In 1966, Tutte defined $k$-connected matroids for each $k \ge 2$. Robertson and Seymour built on Tutte's idea to define tangles as a way to identify highly connected parts of a matroid. Motivated by tangles, a set $Z$ in a matroid $M$ has been defined to be $3$-connected if, for each $j$ in $\{1,2\}$, there is no $j$-separation $(X,Y)$ of $M$ such that each of $|Z \cap X|$ and $|Z \cap Y|$ is at least $j$. This paper solves a problem of Whittle for $k= 3$ by proving that a minor-minimal matroid having $Z$ as a $3$-connected set has at most $3|Z| - 4$ elements. We also identify the matroids for which this bound is attained.
\end{abstract}

\maketitle

\section{Introduction}
  \label{introduction}

The matroid terminology used here will follow \cite{Ox}. There has been extensive study of the properties of connected matroids, that is, of matroids that cannot be written as the direct sum of two non-empty matroids. In 1966, Tutte \cite{Tutte} defined $k$-connected matroids for all $k\geq 2$. In particular, a matroid is 2-connected if and only if it is connected. Tutte was motivated by a desire to extend vertex connectivity in graphs to matroids. His definition guaranteed that the dual of a $k$-connected matroid is also $k$-connected. 

Robertson and Seymour \cite{Rs} introduced tangles to identify highly connected parts of a graph or matroid. Geelen, Gerards, and Whittle have proved a number of important results for tangles in matroids (see, for example, \cite{Ggrw, Ggw}). In work that uses tangles as the primary tool, Brettell et al. \cite{Bjosw} defined a set $Z$ in a matroid $M$ to be \textit{$k$-connected} for an integer $k$ exceeding one provided that $\lambda(A)\geq \text{min}\{|Z\cap A|, |Z-A|, k-1\}$ for all subsets $A$ of $E(M)$. 

Using tangles, the following theorem was proved in \cite[Theorem~2]{Bjosw}. It shows that the presence of a large $k$-connected set in a matroid $M$ guarantees the presence of such a set in a minor $N$ of $M$ where $N$ is \textit{weakly $4$-connected}. This means that $N$ is 3-connected and $\lambda(X) \geq 3$ for every subset $X$ of $E(N)$ with $|X|$ and $|E(N)-X|$ exceeding four.

\begin{theorem}
    Let $Z$ be an $n$-element $k$-connected set in a matroid $M$ where $n\geq 3k-5$ and $k \geq 4$. Then $M$ has a weakly $4$-connected minor that contains an $n$-element $k$-connected set. 
\end{theorem}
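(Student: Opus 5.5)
The plan is to pass from the $k$-connected set $Z$ to a tangle and then to use tangle-preserving minor operations. Since $Z$ is $k$-connected and $|Z| = n \ge 3k-5$, the set $Z$ induces a tangle $\mathcal{T}$ of order $k$ in $M$. Its small sides are the sets $A$ with $\lambda(A) < k-1$ and $|Z \cap A| \le \lambda(A)$; equivalently, the small side is the one containing only few elements of $Z$. The three tangle axioms have to be checked. The main one says that three small sets cannot cover $E(M)$. If $A_1,A_2,A_3$ are small, then $|Z\cap A_i| \le k-2$ for each $i$, so together they meet at most $3(k-2) = 3k-6 < n$ elements of $Z$. This is exactly where the hypothesis $n \ge 3k-5$ enters, and the point $k \ge 4$ is that the order is large enough to control $3$-separations.

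Next I would take $N$ to be a minor of $M$ that is minimal with respect to having an $n$-element $k$-connected set, keeping $Z$ itself if possible. Equivalently, I would use a minimal minor to which $\mathcal{T}$ (or the set $Z$) extends. I would then show that $N$ is $3$-connected and weakly $4$-connected by contradiction. Suppose $N$ has a $j$-separation $(X,Y)$ with $j \le 3$ and, when $j=3$, with both sides of size at least $5$. Because $k \ge 4$, the tangle decides which side is small, say $X$, and then $|Z \cap X| \le j-1 \le 2$.

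The key step is to find an element $e$ of $X$ (or of $X-Z$ after replacing $Z$) such that deleting or contracting $e$ keeps a $k$-connected set of size $n$. For elements of $X - Z$, I would use Bixby's lemma, or the standard tangle fact that for each $e$ the tangle survives in $N\backslash e$ or $N/e$, to get $\lambda_{N'}(A) \ge \min\{|Z\cap A|,|Z-A|,k-1\}$ in one of the two minors. If $X \subseteq Z$ would be forced, then $|X| \le 2$, which contradicts $|X| \ge 5$ in the $3$-separation case and is handled directly in the $1$- and $2$-separation cases. When $X$ meets $Z$ in one or two elements, I would swap those elements of $Z$ for elements of $Y$ that are suitably in the closure or coclosure, using the exactly-$j$ connectivity across $(X,Y)$ to keep $k$-connectivity. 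The size $n$ is preserved because the swap is one-for-one.

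I expect this swapping and element-removal step to be the main obstacle. The difficulty is to show that removing an element from the small side of a low-order separation preserves every inequality $\lambda(A) \ge \min\{|Z\cap A|,|Z-A|,k-1\}$, since the removal can drop $\lambda$ by one. To handle this, I would uncross a violating set $A$ in $N\backslash e$ and a violating set $B$ in $N/e$ with the separation $(X,Y)$ using submodularity, and aim to produce a set violating the $k$-connectivity of $Z$ in $N$. The bound $|Z\cap X| \le 2$ together with $k \ge 4$ should make the resulting inequalities work.
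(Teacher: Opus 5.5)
The paper gives no proof of this statement. It quotes it from Brettell et al.\ (their Theorem~2), who prove it using tangles, so your proposal has to stand on its own.

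Your opening step is correct. When $\lambda(A)\le k-2$, the $k$-connectivity of $Z$ gives $\min\{|Z\cap A|,|Z-A|\}\le\lambda(A)$. Both inequalities cannot hold at once because $n>2k-4$, and the axiom that three small sets do not cover $E(M)$ is exactly $3(k-2)<n$. The $1$- and $2$-separation cases are also fine, provided you handle them by the $2$-sum reduction of Lemma~\ref{newtwo} and Theorem~\ref{thm90} rather than by removing single elements. If $N=N_1\oplus_2N_2$ with $E(N_2)=X\cup p$ and $|Z\cap X|\le 1$, then $\lambda_{N_1}$ agrees with $\lambda_N$ once $p$ is replaced by $X$. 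So $Z$, with $p$ relabelled by the element of $Z\cap X$ if there is one, is $k$-connected in a proper minor of $N$.

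The genuine gap is the $3$-separation case, which is the whole content of the theorem, and none of the tools you invoke settles it.

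\emph{The tangle fact.} The ``standard tangle fact'' that the tangle survives in $N\backslash e$ or $N/e$ for every $e$ is false. $M(K_4)$ has a tangle of order $3$, but none of its single-element deletions or contractions does. Similarly, deleting or contracting any rim element of $M(\mathcal{W}_4)$ destroys the $3$-connectivity of the set of spokes.

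\emph{Bixby's Lemma.} It concerns only the $3$-connectivity of $\mathrm{si}(N/e)$ or $\mathrm{co}(N\backslash e)$. It gives no control over the $4$-separations (and higher) that matter when $k\ge 4$.

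\emph{The uncrossing.} It does not close by itself. Suppose $e\in X-Z$ and $B$ violates the $k$-connectivity of $Z$ in $N\backslash e$. Submodularity with $\lambda_{N\backslash e}(Y)\le 2$ gives $\lambda_{N\backslash e}(B\cap Y)+\lambda_{N\backslash e}(B\cup Y)\le \lambda_{N\backslash e}(B)+2$. But the complement $X-(B\cup e)$ of $B\cup Y$ meets $Z$ in at most two elements, so a small value of $\lambda_{N\backslash e}(B\cup Y)$ need not contradict anything.

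What is missing is an actual mechanism, together with a proof that it keeps an $n$-element $k$-connected set. One option is a choice of $e\in X$ that uses the structure of the $3$-separating set $X$. For example, choosing $e\in\mathrm{cl}(X-e)$ and deleting it rules out only those violating sets $B$ that do not split $X-e$. The other option is to replace all of $X$ at once by a bounded gadget that preserves the relevant connectivities of subsets of $Y$.

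Two further points. Your proposed swap of $Z\cap X$ for closure or coclosure elements of $Y$ is unsupported. Also, minimality with respect to the tangle is not equivalent to minimality with respect to having an $n$-element $k$-connected set. A minor in which the tangle is induced does not by itself supply such a set.
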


In this paper, we explore the properties of $k$-connected sets when $k$ is 2 or 3. The first case can be completely described as follows by using a result of Tutte \cite[6.5]{Tutte}. 

\begin{proposition}
\label{thm2}
    The following are equivalent for a subset $Z$ of the ground set of a matroid $M$.
    \begin{enumerate}[label=(\roman*)]
        \item $Z$ is a $2$-connected set.
        \item $Z$ is contained in a connected component of $M$.
        \item $Z$ is the ground set of a $2$-connected minor of $M$.
    \end{enumerate}
\end{proposition}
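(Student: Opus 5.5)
I will prove the cycle of implications (i)$\Rightarrow$(ii)$\Rightarrow$(iii)$\Rightarrow$(i). With $k=2$, the definition of a $2$-connected set says that $\lambda(A)\geq \min\{|Z\cap A|,|Z-A|,1\}$ for every $A\subseteq E(M)$. Equivalently, there is no separator $A$ of $M$, meaning no set with $\lambda(A)=0$, such that both $Z\cap A$ and $Z-A$ are nonempty.

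For (i)$\Rightarrow$(ii), suppose $Z$ meets two distinct components $C_1$ and $C_2$ of $M$. Take $A=C_1$. Then $\lambda(A)=r(C_1)+r(E-C_1)-r(M)=0$, while $Z\cap A$ and $Z-A$ are both nonempty. This contradicts (i). (If $Z=\emptyset$, then (ii) holds trivially, provided we regard the empty set as lying in a component, or if $E(M)=\emptyset$ the statement is vacuous in all three forms.)

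For (ii)$\Rightarrow$(iii), I will invoke Tutte's result \cite[6.5]{Tutte}: if $e$ is an element of a connected matroid $N$, then at least one of $N\ba e$ and $N/e$ is connected. Let $C$ be the component containing $Z$. The restriction $M|C$ is a connected minor of $M$, obtained by deleting $E-C$. I then remove the elements of $C-Z$ one at a time, each time choosing deletion or contraction so that connectivity is kept. This is possible by Tutte's theorem. The result is a connected minor of $M$ with ground set exactly $Z$. The only small point to check is the degenerate case where $Z$ is empty or a single element. These sets are connected under the usual convention, and Tutte's theorem applies to any element of a connected matroid with at least two elements, so removing elements never gets stuck. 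This step is where the real content lies, but it is handled entirely by the cited result.

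For (iii)$\Rightarrow$(i), let $N=M/C\ba D$ be connected with $E(N)=Z$. Suppose, for a contradiction, that $A\subseteq E(M)$ has $\lambda_M(A)=0$ with $Z\cap A$ and $Z-A$ both nonempty. Connectivity does not increase under taking minors: for $X\subseteq E(N)$ and any $A'$ with $A'\cap E(N)=X$, we have $\lambda_N(X)\le\lambda_M(A')$. This is the standard fact, and it follows from submodularity of the rank function, or equivalently from the fact that separators of $M$ restrict to separators of its minors. Applying it with $X=Z\cap A$ and $A'=A$ gives $\lambda_N(Z\cap A)=0$ with $Z\cap A$ a nonempty proper subset of $Z$. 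This contradicts the connectivity of $N$, so (i) holds.

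I expect no real obstacle. The main care points are the monotonicity of $\lambda$ under minors in (iii)$\Rightarrow$(i) and the repeated use of Tutte's theorem in (ii)$\Rightarrow$(iii).
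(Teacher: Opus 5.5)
Your proposal is correct and follows essentially the same route as the paper. You prove (i)$\Rightarrow$(ii) by using a component meeting $Z$ as a separator, and (ii)$\Rightarrow$(iii) by repeatedly applying Tutte's deletion--contraction result inside the component containing $Z$. Your (iii)$\Rightarrow$(i) rests on the inequality $\lambda_{M/C\ba D}(Z\cap A)\le\lambda_M(A)$, which the paper packages as Lemma~\ref{thm79} and you apply directly in the case $k=2$.
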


In a private communication, Geoff Whittle, after noting the preceeding consequence of Tutte's work, posed the following problem. Find a sharp bound on the size of a matroid $M$ that contains a $k$-connected set $Z$ such that there is no element $x$ of $E(M)-Z$ for which $Z$ is $k$-connected in $M\ba x$ or in $M/x$. He also sought a description of the extremal examples. The next theorem, which is our main result, solves these problems when $k=3$. 

The matroids that attain equality in the bound in the theorem are obtained by attaching copies of $M(K_4)$ to a common line. More precisely, for $k\geq 2$, let $(p_1, p_2, \ldots, p_k)$ be a partition $P$ of the integer $k$, that is, $p_1+p_2+\ldots+p_j = k$ for some $j$ in $[k]$ and $p_1\geq p_2\geq \ldots \geq p_j \geq 1$, while $p_{j+1} = p_{j+2} = \ldots = p_k = 0$. We construct the matroid $N_P$ by beginning with a $(k+2)$-point line $L$ on the set $\{z_1,z_2,e_1,e_2, \ldots, e_k\}$. For each $i$ in $[k]$, let $M_i$ be a copy of $M(K_4)$ that has $\{z_1, z_2, a_i\}$ as a triangle and $\{z_1, z_2, z_{i+2}\}$ as a triad.

Let $p_0 = 0$. For each $t$ in $[j]$ and each $i$ with $p_{t-1} +1 \leq i \leq p_t$, identify $a_i$ with $e_t$ and attach $M_i$ to $L$ via generalized parallel connection across the triangle $\{z_1, z_2, e_t\}$. Finally, delete $\{e_1, e_2, \ldots, e_k\}$ from the resulting matroid to obtain $N_P$. Then $|E(N_P)| = 3k+2$ and $r(N_P) = k+2$. In particular, $|E(N_P)| = 3|Z|-4$ where $Z=\{z_1,z_2,\ldots, z_{k+2}\}$. Let $\mathcal{N}$ denote the class of all matroids that are isomorphic to $N_P$ for some partition $P$ of an integer $k$ exceeding 1. We observe that, when $P= (k,0,0,\ldots,0)$, the matroid $N_P$ is the cycle matroid of the graph that is obtained from $K_{2,k}$ by adding a new vertex $v$ that is adjacent to every other vertex. In this case, $Z$ is the set of edges meeting $v$.

\begin{theorem}
\label{thm35}
    In a matroid $M$, let $Z$ be a $3$-connected subset of $E(M)$. Assume that $M$ is minor-minimal having $Z$ as a $3$-connected set. If $|Z|\leq 3$, then $E(M)=Z$. If $|Z|\geq 4$, then $|E(M)|\leq 3|Z|-4$, and $|E(M)|= 3|Z|-4$ if and only if $M$ or $M^*$ is isomorphic to $N_P$ for some partition $P$ of an integer $k$ exceeding one where $Z=\{z_1, z_2, \ldots, z_{k+2}\}$.
\end{theorem}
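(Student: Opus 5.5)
We argue by induction on $|Z|$, and we use the symmetry of the hypothesis under duality throughout. Since $\lambda_M = \lambda_{M^*}$, a set $Z$ is $3$-connected in $M$ if and only if it is $3$-connected in $M^*$, and minor-minimality is preserved under duality. Hence any argument for deletion has a dual for contraction.

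\textbf{Base cases and reduction to a $3$-connected matroid.} First we show that a minimal $M$ has $M$ connected. By Proposition~\ref{thm2}, $Z$ lies in one component, so any other component could be deleted; this is a contradiction unless $M$ is connected.

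Next we show that $M$ is essentially $3$-connected. Suppose $(X,Y)$ is a $2$-separation with $|Z\cap X|\le 1$. If $Z\cap X=\emptyset$, we replace $M$ by the minor obtained by shrinking $X$ to a single basepoint of a $2$-sum. More precisely, $M$ is a $2$-sum $M_X\oplus_2 M_Y$, and $M_Y$ is a minor of $M$ in which $Z$ remains $3$-connected: every separation of $M_Y$ lifts to one of $M$ with the same $\lambda$ and the same split of $Z$. This contradicts minimality unless $|X|\le 1$. When $Z\cap X=\{z\}$, the same argument shows $M_X$ is a circuit or cocircuit through $z$ and the basepoint, so it reduces to a single element.

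For $|Z|\le 3$ the condition only asks that $Z$ lie in a component and not be split by $2$-separations in a $1$-versus-$2$ way. Taking $Z$ itself as a circuit or independent set minor, obtained by deleting and contracting the complement inside the component while preserving connectivity of $Z$ via Tutte's result behind Proposition~\ref{thm2}, gives $E(M)=Z$. Thus, from here on, $M$ is connected and every $2$-separation $(X,Y)$ has $|X\cap Z|,|Y\cap Z|\ge 2$ or a side of size at most $1$. By minimality, $M$ has no parallel or series pairs outside $Z$ except those forced.

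\textbf{Main counting step.} For each $e\in E(M)-Z$, neither $M\ba e$ nor $M/e$ keeps $Z$ $3$-connected. So there is a separation $(A_e,B_e)$ of $M\ba e$ with $\lambda\le 1$ splitting $Z$ with at least $\min\{\cdot,2\}$ on each side, and a similar one for $M/e$. Since $Z$ is $3$-connected in $M$, adding $e$ back raises $\lambda$. This means $e$ lies in the closure of neither side for deletion, and in the coclosure of neither side for contraction.

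We would apply Bixby-type reasoning to these "certificates" and use uncrossing via submodularity of $\lambda$. Two certificates for different elements that cross can be uncrossed, because $Z$ is $3$-connected and so $\lambda$ of the intersection and union is at least the relevant minimum. The aim is a laminar family of $2$-separations of $M\ba e$ and $M/e$ with $Z$-bipartitions, forming a tree-like structure. We would then show that each element outside $Z$ is "charged" to a pair of elements of $Z$, giving $|E(M)-Z|\le 2|Z|-4$ and hence $|E(M)|\le 3|Z|-4$.

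The natural intermediate lemma is: if $M$ is $3$-connected, then some element of $E(M)-Z$ can be removed by Bixby's Lemma or the Splitter-type argument. Bixby's Lemma says $\si(M/e)$ or $\co(M\ba e)$ is $3$-connected, and this keeps $Z$ $3$-connected unless $e$ lies in a triangle or triad meeting $Z$ in two elements. So in the minimal case, every element of $E(M)-Z$ lies in a triangle or triad with two elements of $Z$. Counting these triangles and triads, with each pair $\{z,z'\}$ supporting limited structure, gives the $2|Z|-4$ bound on $|E(M)-Z|$. When $M$ is not $3$-connected, we induct over the $2$-sum decomposition, applying the $3$-connected bound to each piece and adding.

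\textbf{Main obstacle: the equality case.} Equality forces every counting inequality to be tight. Tightness means every non-$Z$ element is in exactly the right number of triangles or triads with the pair $\{z_1,z_2\}$, and these pieces look like $M(K_4)$'s sharing the pair. We would then show that the $K_4$ pieces hang off a common line through $z_1,z_2$ via generalized parallel connection, which gives $N_P$ or its dual. Conversely, we would verify that $N_P$ is minimal by checking each non-$Z$ element directly. Pinning down the common-line structure and ruling out mixed triangle/triad configurations is where we expect most of the work.
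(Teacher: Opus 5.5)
There is a genuine gap: the bound $|E(M)|\le 3|Z|-4$ is never actually proved. Your one concrete structural fact is correct. It is Lemma~\ref{lem895}(iii): once $M$ is $3$-connected, Bixby's Lemma puts every $e\in E(M)-Z$ in a triangle or triad with two elements of $Z$.

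But this fact is purely local. Even combined with Lemma~\ref{lem895}(iv), it only says that each pair $\{z,z'\}\subseteq Z$ supports at most one such triangle element and one such triad element. That gives $|E(M)-Z|\le 2\binom{|Z|}{2}$, which is quadratic in $|Z|$, so ``each pair supporting limited structure'' cannot produce $2|Z|-4$.

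The uncrossing route is not an argument either. The certificates for different elements are separations of different minors ($M\ba e$, $M\ba f$, $M/e$, \dots), so there is no single submodular function in which to uncross them, and you give no lifting argument. You also never say which pairs get charged, or why only linearly many do with the exact constant. Finally, the induction on $|Z|$ you announce is never used: no step passes from $Z$ to a smaller set while controlling how many elements disappear. (Your reduction already shows $M$ is $3$-connected when $|Z|\ge 4$, so the closing remark about inducting over a $2$-sum decomposition is moot.)

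The missing idea is precisely such an inductive step. The paper observes that if $Z$ spans both $M$ and $M^*$, then $|E(M)|=r(M)+r^*(M)\le 2|Z|$. So, up to duality, $E(M)-Z$ contains a circuit $K$ of $M$. No element of $K$ can be deleted, so Leo's theorem (Theorem~\ref{thm93}) gives a triad meeting $K$ in two elements $a_1,a_2$. Orthogonality with $K$ forces $a_1,a_2$ into triangles, not triads, with two elements of $Z$. Orthogonality between those triangles and the triad then shows the triad is $\{a_1,a_2,z_1\}$, with triangles $\{a_1,z_1,z_2\}$ and $\{a_2,z_1,z_3\}$.

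Next, contract $a_2$, simplify, and take a minimal minor $M_2$ having $Z-z_1$ as a $3$-connected set. Using the generalized parallel connection with $M(K_4)$ and Lemma~\ref{newlem}, one shows $|E(M)|\le |E(M_2)|+3$. Induction then gives $|E(M)|\le 3(|Z|-1)-4+3=3|Z|-4$. This requires the separately proved base case $|Z|=4$ (Theorem~\ref{lem91}).

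Your equality-case sketch has nothing to rest on without this step, and its picture of the extremal matroids is inaccurate. In $N_P$, the elements outside $Z$ lie in triangles with pairs $\{z_1,z_{i+2}\}$ or $\{z_2,z_{i+2}\}$, and in triads meeting $Z$ in a single element. Every other point of the line through $z_1$ and $z_2$ has been deleted. The paper obtains the characterization from the inductive hypothesis $M_2\in\mathcal{N}\cup\mathcal{N}^*$. It then uses the Oxley--Wu result, Leo's theorem, and Tutte's Triangle Lemma to rule out every other way of attaching $a_1,z_1,a_2$.
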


\section{Preliminaries}
  \label{preliminaries}

This section presents some basic properties of $k$-connected matroids. Note that if $Z$ is a $k$-connected set in a matroid $M$, then cl$(Z)$ is not necessarily a $k$-connected set. For example, let $Z$ be the set of spokes of $M(\mathcal{W}_4)$, and let $M$ be obtained from $M(\mathcal{W}_4)$ by adding an element in parallel to one of the elements of $Z$. Then $Z$ is $3$-connected in $M$, but cl$(Z)$ is not 3-connected in $M$. Although adding elements to a $k$-connected set can destroy $k$-connectedness, removing elements cannot. In particular, for all $k\geq 2$, the empty set is $k$-connected in every matroid.
  
\begin{lemma}
    Every subset of a $k$-connected set in a matroid $M$ is $k$-connected in $M$.
\end{lemma}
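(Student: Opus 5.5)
The plan is to check the defining inequality directly. Let $Z$ be $k$-connected in $M$ and let $Z' \subseteq Z$. By definition, $Z$ being $k$-connected means that, for every subset $A$ of $E(M)$,
\[
\lambda(A)\geq \min\{|Z\cap A|, |Z-A|, k-1\}.
\]
We need the analogous inequality with $Z'$ in place of $Z$, for every $A \subseteq E(M)$.

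The key step is monotonicity of each term in the minimum. Fix $A \subseteq E(M)$. Because $Z' \subseteq Z$, we have $Z'\cap A \subseteq Z\cap A$ and $Z'-A \subseteq Z-A$. Hence $|Z'\cap A| \le |Z\cap A|$ and $|Z'-A|\le |Z-A|$. The minimum of three quantities does not increase when two of them are replaced by smaller or equal values and the third, $k-1$, is unchanged. Therefore
\[
\min\{|Z'\cap A|, |Z'-A|, k-1\}\le \min\{|Z\cap A|, |Z-A|, k-1\}\le \lambda(A).
\]

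Since $A$ was arbitrary, $Z'$ satisfies the definition of a $k$-connected set in $M$. The connectivity function $\lambda$ depends only on $M$ and $A$, not on the chosen set $Z$ or $Z'$, so nothing further needs to be checked. There is no real obstacle here; the only point to verify is that the definition quantifies over all $A\subseteq E(M)$ with a fixed $\lambda$, which is exactly what makes the monotonicity argument work. As a special case, taking $Z'=\emptyset$ recovers the remark that the empty set is $k$-connected in every matroid, since then the minimum is $0 \le \lambda(A)$.
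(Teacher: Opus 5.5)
Your proof is correct and follows essentially the same route as the paper's: both use $|Z'\cap A|\le |Z\cap A|$ and $|Z'-A|\le |Z-A|$ to get $\min\{|Z'\cap A|,|Z'-A|,k-1\}\le \min\{|Z\cap A|,|Z-A|,k-1\}\le\lambda(A)$ for every $A\subseteq E(M)$.
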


\begin{proof}
    Assume $Z$ is $k$-connected in $M$. Suppose $Z^\prime \subseteq Z$. We know that
    $$\lambda(A)\geq \text{min}\{|Z\cap A|,|Z-A|,k-1\}$$
    for all $A\subseteq E(M)$. But
    $$\text{min}\{|Z\cap A|,|Z-A|,k-1\}\geq \text{min}\{|Z^\prime\cap A|,|Z^\prime-A|,k-1\}.$$
    Thus $Z^\prime$ is $k$-connected.
\end{proof}

The following is an immediate consequence of the fact that $\lambda_M(X) = \lambda_{M^*}(X)$ for all sets $X$ in a matroid $M$.

\begin{lemma}
    \label{dual}
    A set $X$ in a matroid $M$ is $k$-connected if and only if $X$ is $k$-connected in $M^*$. 
\end{lemma}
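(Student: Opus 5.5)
The statement is Lemma~\ref{dual}, so the plan is to verify it directly from the definition together with the identity $\lambda_M(X)=\lambda_{M^*}(X)$ for every $X\subseteq E(M)$.

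First I will recall why that identity holds. We have $\lambda_M(X)=r(X)+r(E-X)-r(M)$, and the dual rank is $r^*(X)=|X|+r(E-X)-r(M)$. Substituting $r^*$ into the expression for $\lambda_{M^*}(X)$ gives
\[
|X|+r(E-X)-r(M)+|E-X|+r(X)-r(M)-\bigl(|E|-r(M)\bigr).
\]
Since $|X|+|E-X|=|E|$, this simplifies to $r(X)+r(E-X)-r(M)=\lambda_M(X)$.

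Next I will use that $M$ and $M^*$ have the same ground set $E$. So the family of subsets $A$ over which the defining inequality is quantified is the same in both matroids. The right-hand side $\min\{|Z\cap A|,|Z-A|,k-1\}$ involves only cardinalities, so it does not depend on the matroid at all.

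Combining these, for each $A\subseteq E$ the inequality $\lambda_M(A)\ge \min\{|X\cap A|,|X-A|,k-1\}$ holds exactly when the same inequality holds with $\lambda_{M^*}(A)$ in place of $\lambda_M(A)$. Quantifying over all $A$ then shows that $X$ is $k$-connected in $M$ if and only if $X$ is $k$-connected in $M^*$.

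There is no real obstacle here. The only step with any content is the computation of $\lambda_{M^*}$, which is routine.
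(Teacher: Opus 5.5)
Your proposal is correct and follows the paper's approach: the paper treats the lemma as an immediate consequence of $\lambda_M(A)=\lambda_{M^*}(A)$ for all $A\subseteq E(M)$, and your computation of that identity from $r^*(A)=|A|+r(E-A)-r(M)$ is accurate. The only blemish is cosmetic: you switch between $Z$ and $X$ for the set being tested, so fix one name throughout.
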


Cunningham and Edmonds \cite{Ce1, Ce2} described a decomposition of a connected matroid using repeated 2-sums. The details of this appear in \cite[Section 8.3]{Ox}. We now summarize these details. A \textit{matroid-labeled tree} $T$ is a tree with vertex set $\{M_1,M_2,\ldots,M_k\}$ for some positive integer $k$ where each $M_i$ is a matroid. Moreover, 
\begin{enumerate}[label=(\roman*)]
    \item if $M_{j_1}$ and $M_{j_2}$ are joined by an edge $e_i$ of $T$, then $E(M_{j_1})\cap E(M_{j_2}) = \{e_i\}$, and $\{e_i\}$ is not a separator of $M_{j_1}$ or $M_{j_2}$; and
    \item if $M_{j_1}$ and $M_{j_2}$ are non-adjacent, then $E(M_{j_1})\cap E(M_{j_2})$ is empty.
\end{enumerate}

    Let $e$ be an edge of a matroid-labeled tree $T$. Suppose $e$ joins vertices labeled by $N_1$ and $N_2$. Then $T/e$ denotes the matroid-labeled tree in which the vertex that results by identifying the endpoints of $e$ is labeled by $N_1\oplus_2N_2$ and all other vertex and edge labels are unchanged. A \textit{tree decomposition} of a 2-connected matroid $M$ is a matroid-labeled tree $T$ such that if $V(T) = \{M_1,M_2,\ldots,M_k\}$ and $E(T) = \{e_1, e_2, \ldots, e_{k-1}\}$, then 
\begin{enumerate}[label=(\roman*)]
    \item $E(M) = (E(M_1)\cup E(M_2) \cup \ldots \cup E(M_k))-\{e_1,e_2,\ldots, e_{k-1}\}$;
    \item $|E(M_i)|\geq 3$ for all $i$ unless $|E(M)|<3$, in which case, $k=1$ and $M_1 = M$; and
    \item $M$ is the matroid that labels the only vertex of $T/e_1,e_2,\ldots,e_{k-1}$.
\end{enumerate}

The following result \cite{Ce2, Sey} (see also \cite[Proposition 8.3.9, Theorem~8.3.10]{Ox}) notes that every 2-connected matroid can be written in terms of 2-sums of 3-connected matroids. We observe that these 3-connected matroids may be circuits or cocircuits of size less than four.

\begin{theorem}
    \label{new}
    Let $M$ be a $2$-connected matroid. Then $M$ has a tree decomposition $T$ in which every vertex is labeled by a $3$-connected matroid. Moreover, if no two adjacent vertices of $T$ are labeled by circuits and no two adjacent vertices are labeled by cocircuits, then $T$ is unique.
\end{theorem}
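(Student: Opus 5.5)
The plan is to prove existence by induction on $|E(M)|$ and to prove uniqueness by characterizing the edges of $T$ intrinsically in terms of the $2$-separations of $M$.

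\emph{Existence.} If $|E(M)|<3$, or if $M$ is $3$-connected, take $T$ to be the single vertex labeled by $M$. Otherwise $M$ is connected but not $3$-connected, so it has an exact $2$-separation $(X,Y)$ with $|X|,|Y|\ge 2$. By the standard $2$-sum decomposition \cite[Section 8.3]{Ox}, $M=M_X\oplus_2 M_Y$, where $E(M_X)=X\cup\{e\}$, $E(M_Y)=Y\cup\{e\}$, and $e$ is a new basepoint. Both $M_X$ and $M_Y$ are connected, since each is isomorphic to a minor of $M$ obtained by replacing one side with a path. Each has at least $3$ and fewer than $|E(M)|$ elements. By induction, $M_X$ and $M_Y$ have tree decompositions $T_X$ and $T_Y$ whose vertices are labeled by $3$-connected matroids. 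Exactly one vertex of $T_X$ and exactly one vertex of $T_Y$ contains $e$; join these two vertices by a new edge labeled $e$ to get $T$. We then check the conditions of a matroid-labeled tree. The sets $\{e\}$ are not separators because every vertex matroid is connected with at least $3$ elements. Contracting all edges of $T$ recovers $M_X\oplus_2M_Y=M$, by associativity of $2$-sums.

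\emph{Uniqueness.} First, repeatedly contract any edge of $T$ joining two circuits, or two cocircuits. This is legitimate because the $2$-sum of two circuits is a circuit, and the $2$-sum of two cocircuits is a cocircuit. So such a reduced decomposition exists.

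Each edge $f$ of $T$ splits $T$ into two subtrees, and hence induces a $2$-separation $(X_f,Y_f)$ of $M$. The key claim is the following. In a reduced decomposition, the separations $(X_f,Y_f)$ are exactly the $2$-separations $(X,Y)$ of $M$ that are \emph{non-crossing}: for every $2$-separation $(X',Y')$ of $M$, one of $X\cap X'$, $X\cap Y'$, $Y\cap X'$, $Y\cap Y'$ is empty. Once this is proved, the family of edges, together with the partition of $E(M)$ they cut out, is determined by $M$ alone. The vertex labels are then forced, since each vertex matroid is recovered from $M$ as the corresponding $2$-summand. Hence $T$ is unique.

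To prove the claim, we analyze how an arbitrary $2$-separation $(X',Y')$ of $M$ sits relative to $T$, using the fact that connectivity is submodular across $2$-sums.
\begin{itemize}
\item If $(X',Y')$ crosses some $(X_f,Y_f)$, then by uncrossing it induces a $2$-separation inside a single vertex matroid $N$ with at least four elements. Since $3$-connected matroids with at least four elements have no $2$-separations, $N$ must be a circuit or a cocircuit.
\item The reducedness hypothesis now shows that $(X_f,Y_f)$ cannot be crossed in this way. Its neighbours across $f$ are not both circuits or both cocircuits, and this blocks the only mechanism that could produce such a crossing.
\item Conversely, let $(X,Y)$ be non-crossing. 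Suppose it does not separate $T$ along an edge, so that it splits the edges incident with some vertex $N$ nontrivially. Then $N$ has a $2$-separation, so $N$ is a circuit or cocircuit with at least four elements. Such an $N$ admits a $2$-separation crossing $(X,Y)$, which contradicts non-crossing.
\end{itemize}

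I expect the main obstacle to be this correspondence in the uniqueness part. It requires careful bookkeeping of how a $2$-separation of $M$ restricts to the vertex matroids. Essentially one must show that $\lambda_M$ of a union of branches equals the sum of the local contributions at each vertex. Existence, by contrast, is routine.
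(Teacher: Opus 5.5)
The paper does not prove this statement; it quotes it from Cunningham--Edmonds and Seymour (see \cite[Theorem~8.3.10]{Ox}). Your existence argument is the standard one and is fine. (The connectivity of $M_X$ and $M_Y$ holds because a $2$-sum is connected exactly when both summands are, not merely because they are minors of $M$.) Your third bullet is also essentially right: a non-crossed $2$-separation crosses no edge-separation, so orienting each edge toward the side meeting both $X$ and $Y$ yields a vertex at which $(X,Y)$ is a union of branches.

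\textbf{The gap.} What is missing is the other half of your key claim: in a reduced tree, no edge-separation $(X_f,Y_f)$ is crossed. This is the only place reducedness is used, so it is the whole content of uniqueness, and your first two bullets do not prove it. The first bullet does not use reducedness, and as stated it is false. In $U_{2,3}\oplus_2U_{2,3}\cong U_{3,4}$ on $\{a,b,c,d\}$, the edge-separation $(\{a,b\},\{c,d\})$ is crossed by $(\{a,c\},\{b,d\})$, yet neither vertex has four elements. Its conclusion also cannot occur under its hypothesis: a $2$-separation that is a union of branches at a single vertex crosses no edge-separation at all. So ``this blocks the only mechanism'' is just the claim restated.

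\textbf{An argument that works.} The edges crossed by $(X',Y')$ form a subtree. Let $w$ be a leaf of it, with crossed edge $h=ww'$, and let $X_h$ be the side of $h$ containing $w$. Every other branch at $w$ lies in $X'$ or in $Y'$, since its edge is uncrossed. Now use uncrossing, $\lambda(X'\cap X_h)+\lambda(X'\cup X_h)\le 2$, together with the fact that $\lambda$ of a union of whole branches at $w$ can be computed in $w$. This gives a partition $(A,B,\{h\})$ of $E(w)$ with $\lambda_w(A)=\lambda_w(B)=1$. Hence $w$ is a circuit or a cocircuit; if $w$ is $3$-connected, then $|A|=|B|=1$.

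Suppose $w$ is a circuit. Let $M_2$ be the $2$-summand on $Y_h\cup h$, and let $A_2=X'\cap Y_h$ and $B_2=Y'\cap Y_h$, both nonempty. A rank computation gives $\lambda_M(X')=r_{M_2}(A_2)+r_{M_2}(B_2)-r(M_2)+1$, so $(A_2,B_2)$ separates $M_2\ba h$. Then $w'\ba h$ is disconnected, since otherwise $M_2\ba h$ would be a $2$-sum of connected matroids. But a $3$-connected matroid with at least four elements, or a cocircuit, stays connected after one deletion. So $w'$ is a circuit, contradicting reducedness. The cocircuit case follows by duality.

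\textbf{A smaller point.} Contracting an edge between two triangles produces $U_{3,4}$, which is not $3$-connected. So your ``reduced decomposition exists'' step leaves the class of trees in the statement. For instance, $U_{4,5}$ has no tree with $3$-connected labels and no two adjacent circuits. You are really working in Oxley's formulation, where labels may be arbitrary circuits or cocircuits. That does not affect the conditional uniqueness, but you should say so.
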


The tree decomposition of $M$ whose existence and uniqueness are asserted in the last theorem is called the \textit{canonical tree decomposition} for $M$. If $e$ is an edge in a tree decomposition $T$ of a 2-connected matroid $M$, then $T\ba e$ consists of two subtrees $T_1$ and $T_2$. Let $X_i$ be the set of elements of $M$ that are elements of some vertex label in $T_i$. Then $M = N_1 \oplus_2N_2$ where $N_i$ is the matroid that labels the single-vertex tree that is obtained by contracting all of the edges of $T_i$. Observe that $E(N_i) = X_i \cup e$ and that $(X_1,X_2)$ is a 2-separation of $M$. We call this the 2-separation \textit{corresponding} to the edge $e$.

Clearly, if $k\geq 3$ and $Z$ is a $k$-connected set in $M$, then $Z$ is $(k-1)$-connected in $M$. The next two lemmas give some basic properties of $k$-connected sets.

\begin{lemma}
    \label{newone}
    Let $M$ be a $k$-connected matroid. If $Z\subseteq E(M)$, then $Z$ is $k$-connected in $M$.
\end{lemma}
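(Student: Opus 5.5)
The plan is to compare the definition of a $k$-connected set with Tutte's definition of a $k$-connected matroid directly. Recall that $M$ is $k$-connected exactly when $M$ has no $j$-separation for any $j<k$. Equivalently, for every $A \subseteq E(M)$ and every $j<k$, if $|A|\ge j$ and $|E(M)-A|\ge j$, then $\lambda(A) \ge j$. Fix $Z\subseteq E(M)$ and an arbitrary $A\subseteq E(M)$, and set
\[ m=\min\{|Z\cap A|,|Z-A|,k-1\}. \]
We must show that $\lambda(A)\ge m$.

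If $m=0$ there is nothing to prove, since $\lambda$ is nonnegative. Otherwise $1\le m\le k-1$. Because $Z\cap A\subseteq A$ and $Z-A\subseteq E(M)-A$, we get $|A|\ge |Z\cap A|\ge m$ and $|E(M)-A|\ge |Z-A|\ge m$.

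Now argue by contradiction. Suppose $\lambda(A)<m$, and let $j=\lambda(A)+1$, so that $1\le j\le m\le k-1$. Then $\lambda(A)=j-1<j$, while $|A|\ge m\ge j$ and $|E(M)-A|\ge j$. Thus $(A,E(M)-A)$ is a $j$-separation of $M$ with $j<k$, which contradicts the assumption that $M$ is $k$-connected. Hence $\lambda(A)\ge m$ for every $A$, and $Z$ is $k$-connected.

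There is no real obstacle here. The only point needing care is choosing $j=\lambda(A)+1$ rather than $j=m$, so that $\lambda(A)<j$ holds while the size conditions $|A|,|E(M)-A|\ge j$ are still met. This works because $j\le m$.
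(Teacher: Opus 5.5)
Your proof is correct and is essentially the paper's argument: both rest on $|A|\ge|Z\cap A|$ and $|E(M)-A|\ge|Z-A|$, so that $\lambda(A)<m$ would yield a separation of order less than $k$. The paper splits into the cases $m=k-1$ and $m=|Z\cap A|<k-1$ (after assuming $|Z\cap A|\le|Z-A|$), while you treat $m$ uniformly; also, the choice $j=\lambda(A)+1$ is not needed, since under the standard definition ($\lambda(A)<j$) taking $j=m$ already gives an $m$-separation, exactly as the paper does.
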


\begin{proof}
    Suppose that $Z$ is not $k$-connected. Then $E(M)$ has a subset $A$ for which $\lambda(A)<\text{min}\{|Z\cap A|, |Z-A|, k-1\}$. As $\lambda(A) = \lambda(E(M)-A)$, we may assume that $|Z\cap A|\leq |Z-A|$. Suppose that $|Z\cap A| \geq k-1$. Then $\lambda(A)<k-1$. But $|A|, |E(M)-A| \geq k-1$, a contradiction to the fact that $M$ is $k$-connected. Thus, for some $s<k-1$, we have $\lambda(A)<s=|Z\cap A|$. Then, as $|A|, |E(M)-A|\geq |Z\cap A|$, it follows that $M$ has an $s$-separation, a contradiction.
\end{proof}

\begin{lemma}
\label{thm79}
    Let $Z$ be the ground set of a $k$-connected minor $M/C\ba D$ of a matroid $M$. Then $Z$ is $k$-connected in $M$.
\end{lemma}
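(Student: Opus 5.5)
The plan is to use the standard fact that connectivity cannot increase under taking minors. Precisely, for $N = M/C\ba D$ and any $A \subseteq E(M)$, we have $\lambda_N(A\cap E(N)) \le \lambda_M(A)$. Combined with Lemma~\ref{newone}, applied to the $k$-connected matroid $N$ with $Z=E(N)$, this gives the result almost directly.

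First I will reduce the monotonicity fact to single-element deletions and contractions. By Lemma~\ref{dual}, together with $\lambda_M = \lambda_{M^*}$ and $(M/e)^* = M^*\ba e$, it suffices to treat deletion. For $X \subseteq E(M)-e$, set $Y = E(M)-e-X$. Then
\[
\lambda_{M\ba e}(X) = r(X)+r(Y)-r(M\ba e) \le r(X)+r(Y\cup e)-r(M) = \lambda_M(X),
\]
because $r(Y\cup e)-r(Y) \ge r(M)-r(M\ba e)$ by submodularity. (If $e$ is a coloop, both sides of this last inequality equal $1$; otherwise the right side is $0$.) The same computation works with $X\cup e$ in place of $X$, so in both cases $\lambda_{M\ba e}(A - e) \le \lambda_M(A)$. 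Induction on $|C\cup D|$ then gives $\lambda_N(A\cap Z) \le \lambda_M(A)$ for every $A\subseteq E(M)$.

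Now take any $A \subseteq E(M)$ and let $A' = A\cap Z \subseteq E(N)$. Since $N$ is $k$-connected, Lemma~\ref{newone} shows that $Z=E(N)$ is $k$-connected in $N$. Hence
\[
\lambda_N(A') \ge \min\{|Z\cap A'|, |Z-A'|, k-1\} = \min\{|Z\cap A|, |Z-A|, k-1\}.
\]
Chaining this with the monotonicity inequality gives $\lambda_M(A) \ge \min\{|Z\cap A|,|Z-A|,k-1\}$, which is exactly the statement that $Z$ is $k$-connected in $M$.

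The only real work is the monotonicity inequality, specifically the submodularity step for deletion. This is routine and is essentially \cite[Corollary 8.2.5]{Ox}, which could be cited instead. Everything else is bookkeeping: checking that $Z\cap A$ and $Z-A$ are unchanged when $A$ is replaced by $A\cap Z$.
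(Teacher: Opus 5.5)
Your proof is correct and takes the paper's route: the paper also combines the minor-monotonicity $\lambda_{M/C\ba D}(Z\cap A)\le\lambda_M(A)$ (citing Corollary~8.2.5 of Oxley's book) with the $k$-connectivity of $N=M/C\ba D$. The differences are cosmetic: the paper argues by contradiction and repeats the case split on $|Z\cap A|$ from the proof of Lemma~\ref{newone}, whereas you apply Lemma~\ref{newone} to $N$ directly and prove the monotonicity by submodularity instead of citing it.
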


\begin{proof}
    Suppose $Z$ is not $k$-connected in $M$. Then there is a subset $A$ of $E(M)$ such that $$\lambda (A) <\text{min}\{|Z\cap A|, |Z-A|, k-1\}.$$ Now, by, for example,  \cite[Corollary~8.2.5]{Ox}, $$\lambda_{M/ C\ba D}(Z\cap A)\leq \lambda (A) <\text{min}\{|Z\cap A|, |Z-A|, k-1\}.$$ We may assume that $|Z\cap A| \leq |Z-A|$. Suppose $|Z\cap A| \geq k-1$. Then, as $M/ C\ba D$ is $k$-connected, $$k-1 \leq \lambda_{M/ C\ba D}(Z\cap A)\leq \lambda (A) <\text{min}\{|Z\cap A|, |Z-A|, k-1\} = k-1.$$ This contradiction implies that $|Z\cap A|= s$ for some $s<k-1$. Then, since $M/ C\ba D$ is $k$-connected, $$s \leq \lambda_{M/ C\ba D}(Z\cap A)\leq \lambda (A) <\text{min}\{|Z\cap A|, |Z-A|, k-1\} = s.$$ This contradiction implies that $Z$ is $k$-connected in $M$.
\end{proof}

The converse of the last result does not hold. For example, let $M=M(\mathcal{W}_4)$ and $Z$ be its set of spokes. Then, by Lemma~\ref{newone}, $Z$ is 3-connected. But, as one can easily check, $Z$ is not the ground set of a 3-connected minor of $M$. 

By Proposition~\ref{thm2}, we see that the converse of Lemma~\ref{newone} also fails. For example, suppose $k\geq 2$ and let $Z$ be the ground set of a $k$-connected matroid $N$. Let $M = N\oplus U_{1,1}$. Then, by Lemma~\ref{thm79}, $Z$ is a $k$-connected set in $M$. Clearly $M$ is not a $k$-connected matroid.

The behavior of $k$-connected sets when $k=2$ is described in Proposition~\ref{thm2}. Although this is a straightforward consequence of the results of Tutte~\cite{Tutte}, we give the proof for completeness.

\begin{proof}[Proof of Proposition~\ref{thm2}.]
    Since (i)-(iii) hold when $Z = \emptyset$, we may assume that $Z\neq \emptyset$. By Lemma~\ref{thm79}, (iii) implies (i). Now assume that (i) holds. Suppose that $Z$ is not contained in a component of $M$. Let $X$ be a component that meets $Z$. Then $Z\cap (E-X) \neq \emptyset$. As $Z$ is 2-connected, 
    $$0 = \lambda(X) \geq \text{min}\{|Z\cap X|, |Z-X|, 1\},$$
    a contradiction. Thus $Z$ is contained in a component of $M$, that is, (i) implies (ii). 

    Finally, we show that (ii) implies (iii). Assume that $Z$ is contained in a component $N$ of $M$. Then, by a theorem of Tutte~\cite[6.5]{Tutte} 
    (see also \cite[Theorem~4.3.1]{Ox}), there is a sequence $N_1,N_2, \ldots, N_s$ of 2-connected matroids such that $N_1 = N$ and $E(N_s) = Z$, while, for all $i$ in $\{2,3,\ldots,s\}$, the matroid $N_i$ is a single-element deletion or contraction of $N_{i-1}$. Thus $Z$ is the ground set of a 2-connected minor of $M$. Therefore (ii) implies (iii) and the theorem holds.
\end{proof}

 For a positive integer $j$, let $(X,Y)$ be a $j$-separation of a matroid $M$. We say that a set $Z$ \textit{exhibits} the $j$-separation $(X,Y)$ if $|Z\cap X|\geq j$ and $|Z\cap Y| \geq j$. 

 \begin{lemma}
 \label{lem3}
     Suppose a set $Z$ is $(k-1)$-connected in a matroid $M$. Then $Z$ is not $k$-connected if and only if $Z$ exhibits an exact $(k-1)$-separation of $M$.
 \end{lemma}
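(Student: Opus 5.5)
We prove both directions directly from the definition: $Z$ is $k$-connected exactly when $\lambda(A)\ge \min\{|Z\cap A|,|Z-A|,k-1\}$ for every $A\subseteq E(M)$. Throughout, an exact $(k-1)$-separation $(X,Y)$ means $\lambda(X)=k-2$. The only tool needed is the comparison between the $k$-connected and the $(k-1)$-connected inequalities: their right-hand sides differ only when $\min\{|Z\cap A|,|Z-A|\}\ge k-1$.

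For the \emph{if} direction, suppose $Z$ exhibits an exact $(k-1)$-separation $(X,Y)$. Then $\lambda(X)=k-2$, while $|Z\cap X|\ge k-1$ and $|Z\cap Y|\ge k-1$. Since $Z-X=Z\cap Y$, taking $A=X$ gives
$$\min\{|Z\cap X|,|Z-X|,k-1\}=k-1>k-2=\lambda(X).$$
Hence $Z$ is not $k$-connected. This direction does not use the hypothesis that $Z$ is $(k-1)$-connected.

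For the \emph{only if} direction, suppose $Z$ is $(k-1)$-connected but not $k$-connected. Choose $A$ with $\lambda(A)<\min\{|Z\cap A|,|Z-A|,k-1\}$.

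\textbf{Case 1: $\min\{|Z\cap A|,|Z-A|\}\le k-2$.} Then the minimum with $k-1$ equals the minimum with $k-2$, so $A$ also violates the $(k-1)$-connectedness of $Z$. This is a contradiction, so this case cannot occur.

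\textbf{Case 2: $|Z\cap A|\ge k-1$ and $|Z-A|\ge k-1$.} Here the violation gives $\lambda(A)\le k-2$. The $(k-1)$-connected inequality, which still holds for $A$, gives
$$\lambda(A)\ge\min\{|Z\cap A|,|Z-A|,k-2\}=k-2.$$
Therefore $\lambda(A)=k-2$. Put $X=A$ and $Y=E(M)-A$. Each side contains at least $k-1$ elements of $Z$, so $|X|,|Y|\ge k-1$, and $(X,Y)$ is a $(k-1)$-separation. It is exact because $\lambda(X)=k-2$, and $Z$ exhibits it.

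The one point requiring care is the Case 1 comparison, namely that $\min\{a,b,k-1\}$ and $\min\{a,b,k-2\}$ coincide whenever $\min\{a,b\}\le k-2$. Beyond that, the argument is routine bookkeeping. The edge case $k=2$ poses no difficulty: $1$-connectedness of $Z$ is vacuous because $\lambda\ge 0$, and the same argument yields an exact $1$-separation.
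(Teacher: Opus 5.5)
Your proof is correct and follows essentially the same route as the paper. The paper likewise takes a violating set $X$, uses the $(k-1)$-connected inequality to force $|Z\cap X|\ge k-1$ and $|Z-X|\ge k-1$ and hence $\lambda(X)=k-2$, and proves the converse exactly as you do; your Case~1/Case~2 split simply makes explicit the step the paper compresses into a single ``therefore''.
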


\begin{proof}
    Suppose $Z$ is not $k$-connected. Then there is a subset $X$ of $E(M)$ such that 
    $$\lambda(X) < \text{min}\{|Z\cap X|, |Z-X|, k-1\}.$$
    Since $Z$ is $(k-1)$-connected,
    $$\lambda(X) \geq \text{min}\{|Z\cap X|, |Z-X|, k-2\}.$$ Therefore $|Z\cap X| \geq k-1$ and $|Z-X| \geq k-1$. Thus $\lambda(X) = k-2$, so $Z$ exhibits the $(k-1)$-separation $(X,E-X)$ and this $(k-1)$-separation is exact. 

    Conversely, suppose $Z$ exhibits an exact $(k-1)$-separation $(X,Y)$ of $M$. Then $\lambda(X) = k-2$ while $|Z\cap X| \geq k-1$ and $|Z-X|\geq k-1$. Therefore $$\lambda(X) < \text{min}\{|Z\cap X|, |Z-X|, k-1\},$$ so $Z$ is not $k$-connected. 
\end{proof}

The next two results follow from Proposition~\ref{thm2} and Lemma~\ref{lem3}.

\begin{corollary}
\label{cor1}
    In a matroid $M$, a set $Z$ of size at least four is $3$-connected if and only if $Z$ is contained in a component of $M$ and $Z$ does not exhibit a $2$-separation of $M$.
\end{corollary}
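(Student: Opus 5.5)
The plan is to prove the two directions separately, using Proposition~\ref{thm2} to handle the $2$-connected part of the definition and Lemma~\ref{lem3} (with $k=3$) to handle the passage from $2$-connected to $3$-connected.

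For the forward direction, suppose that $Z$ is $3$-connected with $|Z|\ge 4$.
\begin{enumerate}[label=(\arabic*)]
\item Since $Z$ is $3$-connected, it is in particular $2$-connected. By Proposition~\ref{thm2}, (i) implies (ii), so $Z$ lies in a component of $M$.
\item Suppose, for a contradiction, that $Z$ exhibits a $2$-separation $(X,Y)$. Then $\lambda(X)\le 1$, while $|Z\cap X|\ge 2$ and $|Z\cap Y|\ge 2$.
\item Since $Z$ is $2$-connected and meets both $X$ and $Y$, the definition gives $\lambda(X)\ge \min\{|Z\cap X|,|Z-X|,1\}=1$. Hence $\lambda(X)=1$, so the $2$-separation is exact.
\item Lemma~\ref{lem3} with $k=3$ now says that $Z$ is not $3$-connected, which is a contradiction. (Alternatively, the defining inequality fails directly: $1<\min\{2,2,2\}$.)
\end{enumerate}

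For the converse, suppose that $Z$ lies in a component of $M$ and exhibits no $2$-separation of $M$.
\begin{enumerate}[label=(\arabic*)]
\item By Proposition~\ref{thm2}, (ii) implies (i), so $Z$ is $2$-connected.
\item If $Z$ were not $3$-connected, then Lemma~\ref{lem3} would give an exact $2$-separation that $Z$ exhibits. This contradicts the hypothesis, so $Z$ is $3$-connected.
\end{enumerate}

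Neither direction has a real obstacle. The only point needing care is step (3) of the forward direction: a $2$-separation exhibited by a $2$-connected $Z$ is automatically exact. This follows because $\lambda(X)=0$ would split $Z$ across a separator, contradicting $2$-connectedness. The hypothesis $|Z|\ge 4$ is not actually needed by the argument; it is only what makes the condition ``$Z$ exhibits a $2$-separation'' meaningful.
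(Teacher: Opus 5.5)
Your proof is correct and is the paper's argument: the paper says only that the corollary follows from Proposition~\ref{thm2} (to pass between ``contained in a component'' and ``$2$-connected'') and Lemma~\ref{lem3} with $k=3$, and you supply exactly those details. Your side remarks are also right: the forward direction can be read off directly from the defining inequality, and $|Z|\ge 4$ is never used.
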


\begin{corollary}
\label{lem84}
    Let $Z$ be a $2$-connected set in a matroid $M$, and suppose $|Z|\in \{1,2,3\}$. Then $M$ has a $3$-connected minor $N$ with ground set $Z$.
\end{corollary}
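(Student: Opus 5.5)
We prove Corollary~\ref{lem84} using Proposition~\ref{thm2} together with the fact that every matroid on at most three elements that is $2$-connected is automatically $3$-connected. Since $Z$ is $2$-connected in $M$, the implication (i)$\Rightarrow$(iii) of Proposition~\ref{thm2} gives a $2$-connected minor $N = M/C\ba D$ of $M$ with $E(N) = Z$. It then remains to check that $N$ is in fact $3$-connected.

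To do this, let $(X,Y)$ be a $j$-separation of $N$ with $j \le 2$. We have $|X|, |Y| \ge j$ and $|X| + |Y| = |Z| \le 3$. Because $N$ is $2$-connected, there is no $1$-separation, so $j = 2$. But a $2$-separation needs $|X|, |Y| \ge 2$, which forces $|Z| \ge 4$. This is a contradiction, so $N$ has no $2$-separation and is therefore $3$-connected.

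An alternative, slightly more explicit, route is to identify the possible $N$ directly. A connected matroid on one element is a loop or coloop. On two elements it is $U_{1,2}$. On three elements it is $U_{1,3}$ or $U_{2,3}$. Each of these is $3$-connected under Tutte's definition, since the size condition on separations cannot be met.

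There is essentially no obstacle here. The only point needing care is the degenerate case $|Z| = 1$. Here Proposition~\ref{thm2} still yields a connected single-element minor, and such a matroid is $3$-connected vacuously.
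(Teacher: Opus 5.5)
Your proof is correct and follows the paper's argument. You apply (i)$\Rightarrow$(iii) of Proposition~\ref{thm2} to get a $2$-connected minor on $Z$, then observe that a matroid with at most three elements cannot have a $2$-separation, so that minor is $3$-connected; you simply write out the size check that the paper states in one line.
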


\begin{proof}
    Since each 2-connected matroid with at most three elements is 3-connected, the result follows immediately by Proposition~\ref{thm2}.
\end{proof}

\section{Minor-minimal matroids containing a 3-connected set.}

In this section, we prove Theorem~\ref{thm35}. We begin with a preliminary result. 

\begin{lemma}
\label{lem31}
    Suppose $M$ is a minor-minimal matroid in which $Z$ is a $3$-connected set. If $|Z|\geq 4$, then $M$ is $2$-connected.
\end{lemma}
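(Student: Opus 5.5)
The natural approach is a direct argument by contradiction. We use Proposition~\ref{thm2} to locate $Z$ inside a single component, and then show that anything outside that component can be removed without destroying the $3$-connectivity of $Z$. This removal contradicts minimality.

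Suppose $M$ is minor-minimal with $Z$ a $3$-connected set, $|Z|\geq 4$, and $M$ is not $2$-connected. Since $Z$ is $3$-connected, it is in particular $2$-connected. So, by Proposition~\ref{thm2}, $Z$ is contained in a single component $C$ of $M$. As $M$ is not connected, $E(M)-C$ is non-empty and is a union of other components; choose an element $x\in E(M)-C$. We will show that $Z$ remains $3$-connected in $M\ba x$, which contradicts minor-minimality.

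To verify this, we use Corollary~\ref{cor1}, which applies since $|Z|\geq 4$. First, $Z$ lies in $C$, and $C$ remains a component of $M\ba x$ because deleting an element outside $C$ does not change $M|C$, and $C$ is still a separator. Second, suppose $Z$ exhibits a $2$-separation $(X,Y)$ of $M\ba x$, so $\lambda_{M\ba x}(X)\leq 1$ and $|Z\cap X|,|Z\cap Y|\geq 2$. Consider $(X\cup x, Y)$ in $M$. Since $x$ lies in a separator $E(M)-C$ of $M$, write $M=M|C\oplus M|(E-C)$. Connectivity is additive over direct sums: for $A\subseteq E(M)$,
\[
\lambda_M(A)=\lambda_{M|C}(A\cap C)+\lambda_{M|(E-C)}(A-C).
\]
Hence $\lambda_M(X\cup x)$ is at most $\lambda_{M\ba x}(X)$ plus the change in the non-$C$ part. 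The cleaner route is to instead take $X'=(X\cap C)\cup(E(M)-C)$ and $Y'=Y\cap C$. Then
\[
\lambda_M(X')=\lambda_{M|C}(X\cap C)=\lambda_{M\ba x|C}(X\cap C)\leq \lambda_{M\ba x}(X)\leq 1.
\]
Here the first inequality holds because connectivity does not increase under restriction (or directly from additivity over the direct sum $M\ba x = M|C\oplus M\ba x|(E-C-x)$). Since $Z\subseteq C$, we have $|Z\cap X'|=|Z\cap X|\geq 2$ and $|Z\cap Y'|=|Z\cap Y|\geq 2$. So $(X',Y')$ is a $2$-separation of $M$ exhibited by $Z$, contradicting Corollary~\ref{cor1} applied to $M$. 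Therefore $Z$ is $3$-connected in $M\ba x$, giving the contradiction.

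The main point needing care is the additivity of $\lambda$ over direct sums, or equivalently the monotonicity $\lambda_{M|C}(A)\leq\lambda_{M\ba x}(A')$. This follows directly from the rank function: $r(X)=r(X\cap C)+r(X-C)$ for separators. Alternatively, one can avoid it entirely by invoking the minor inequality $\lambda_{N}(Z\cap A)\leq\lambda_M(A)$ already quoted in the proof of Lemma~\ref{thm79}, applied to the restriction $M|C$ as a minor of $M\ba x$. Everything else is bookkeeping.
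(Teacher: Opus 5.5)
Your proof is correct and is essentially the paper's argument: both take a $2$-separation exhibited by $Z$ in a smaller matroid and lift it to $M$ by putting the components of $M$ that avoid $Z$ on one side, using additivity of $\lambda$ over direct sums. The only difference is that the paper deletes all of $E(M)-E(N)$ at once and uses minimality to get a $2$-separation of the component $N$ itself, which avoids your intermediate step of intersecting a $2$-separation of $M\ba x$ with $C$.
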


\begin{proof}
    Since $Z$ is 3-connected, $Z$ is 2-connected. Thus, by Proposition~\ref{thm2}, $Z$ is in a component $N$ of $M$. We may assume that $N\neq M$ otherwise the result holds. By the minimality of $M$ and Proposition~\ref{thm2}, in $N$, the set $Z$ is not 3-connected, but $Z$ is 2-connected. Thus $N$ has a 2-separation $(X,Y)$ in which $|Z\cap X|$ and $|Z\cap Y|$ are both at least 2. Since $M=N\oplus (M\ba E(N))$, one easily checks that $(X,Y\cup (E(M)-E(N))$ is a 2-separation of $M$. Thus 
    $$\lambda_M(X) < \text{min}\{|Z\cap X|, |Z-X|, 2\}.$$ Hence $Z$ is not 3-connected in $M$, a contradiction.
\end{proof}

The following lemma will be used in the proof of the next theorem. 

\begin{lemma}
\label{newtwo}
    Let $Z$ be a $3$-connected set in a $2$-connected matroid $M$. Suppose $M=M_1\oplus_2M_2$ where $p$ is the basepoint of the $2$-sum.  
    \begin{enumerate}[label=(\roman*)]
        \item If $Z\subseteq E(M_1)$, then $Z$ is $3$-connected in $M_1$. 
        \item If $Z\cap E(M_2) = \{z\}$ and $M_1^\prime$ is obtained from $M_1$ by relabeling the basepoint $p$ by $z$, then $M_1^\prime$ is a minor of $M$ having $Z$ as a $3$-connected set.
    \end{enumerate}
\end{lemma}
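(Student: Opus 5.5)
Both parts rest on one comparison: separations of $M_1$ (or $M_1'$) can be lifted to separations of $M$ with no larger connectivity, so any 2-separation exhibited by $Z$ in the smaller matroid would also be exhibited in $M$. Write $E_i=E(M_i)-p$, so $E(M)=E_1\cup E_2$. The basic fact, from \cite[Section 8.3]{Ox}, is that for $X\subseteq E_1$ with $p\notin X$, we have $\lambda_M(X)=\lambda_{M_1}(X)$ and $\lambda_M(X\cup E_2)=\lambda_{M_1}(X\cup p)$. To see this, note that $r_M(X)=r_{M_1}(X)$ for $X\subseteq E_1$. Also, $r_M(Y\cup E_2)=r_{M_1}(Y\cup p)+r(M_2)-1$ for $Y\subseteq E_1$, since $p$ is not a loop or coloop of $M_2$. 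The rank identity $r(M)=r(M_1)+r(M_2)-1$ then gives both equalities directly.

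For (i), assume $Z\subseteq E_1$ (so $p\notin Z$) and $|Z|\ge 4$; smaller $Z$ only needs 2-connectedness, handled below. By Corollary~\ref{cor1}, it suffices to show two things: that $Z$ lies in one component of $M_1$, and that $Z$ exhibits no 2-separation of $M_1$. Since $M$ is 2-connected, $M_1$ is connected, as each part of a 2-sum of a connected matroid is connected; this settles the first point. Now suppose $(A,B)$ is a 2-separation of $M_1$ with $|Z\cap A|,|Z\cap B|\ge 2$, and say $p\in B$. Set $A'=A$ and $B'=(B-p)\cup E_2$. By the basic fact, $\lambda_M(A')=\lambda_{M_1}(A)\le 1$. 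Moreover $Z\cap A'=Z\cap A$ and $Z\cap B'=Z\cap B$, so each has at least two elements. Thus $(A',B')$ is a 2-separation of $M$ exhibited by $Z$, contradicting Lemma~\ref{lem3}. For $|Z|\le 3$, the same lifting shows that $\lambda_{M_1}(A)\ge\min\{|Z\cap A|,|Z-A|,2\}$ holds for all $A$, directly from the definition. In fact this direct argument works for every size of $Z$, so I would present that version uniformly.

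For (ii), I first need to see that $M_1'$ is a minor of $M$. Since $z\in E_2$ and $M_2$ is connected (each part of the 2-sum of the connected matroid $M$ is connected), Proposition~\ref{thm2} applied to $\{z,p\}$ gives a connected minor of $M_2$ on $\{z,p\}$. A connected matroid on two elements has them in parallel or in series. Performing the corresponding deletions and contractions of $E_2-z$ inside $M$ replaces $M_2$ in the 2-sum by this two-element piece. The resulting 2-sum is $M_1$ with $p$ renamed $z$, which is $M_1'$, a minor of $M$. This step is the main obstacle: it requires that minors of $M_2$ avoiding $p$ are realized as minors of $M_1\oplus_2 M_2$ by the same operations, which is standard (\cite[Proposition 7.1.21]{Ox}-type results).

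Then $Z\subseteq E(M_1')$. It remains to verify $\lambda_{M_1'}(A)\ge\min\{|Z\cap A|,|Z-A|,2\}$ for every $A$. By complementation we may assume $z\notin A$. Then $A\subseteq E_1$, and the basic fact applied to the complement gives $\lambda_{M_1'}(A)=\lambda_M(A)$. Counting $Z$ also matches, since $z$ lies on the $E_2$ side in both. Hence the required inequality follows from the 3-connectedness of $Z$ in $M$.
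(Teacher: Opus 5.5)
Your proposal is correct and follows essentially the paper's route. You lift a separation of $M_1$ (or $M_1'$) to $M$ by adding $E(M_2)-p$ to the side containing $p$ (or $z$), and you realize $M_1'$ as a minor by shrinking $M_2$ to a two-element connected piece on $\{z,p\}$. The differences are cosmetic. You derive $\lambda_M(X)=\lambda_{M_1}(X)$ from rank formulas and check the definition for every $A$, whereas the paper works with exact $2$-separations via Lemma~\ref{lem3}. The paper also gets the two-element minor more simply, by deleting and contracting around a circuit of $M_2$ through $\{z,p\}$, instead of invoking Proposition~\ref{thm2}.
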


\begin{proof}
    Since $M$ is 2-connected, $M_1$ is 2-connected. Assume that $Z\subseteq E(M_1)$. Then $Z$ is 2-connected in $M_1$. Suppose that $Z$ is not 3-connected in $M_1$. Then, by Lemma~\ref{lem3}, $Z$ exhibits an exact 2-separation $(X,Y)$ of $M_1$. Without loss of generality, the basepoint $p$ of the 2-sum $M_1\oplus_2 M_2$ lies in $Y$. Then $(X,(Y\cup E(M_2))-p)$ is an exact 2-separation of $M$ that is exhibited by $Z$, a contradiction. Thus (i) holds. 

    Now suppose that $Z\cap E(M_2) = \{z\}$. Take a circuit $C$ of $M_2$ containing $\{z,p\}$. Then $M_2\ba (E(M_2)-C)/(C-\{z,p\})$ is a 2-circuit with ground set $\{z,p\}$. It follows that $M\ba (E(M_2)-C)/(C-\{z,p\})$ is $M_1^\prime$ and that $E(M_1^\prime)$ contains $Z$. Then $M_1^\prime$ is 2-connected as $M_1^\prime \cong M_1$. Thus $Z$ is 2-connected in $M_1^\prime$. If $Z$ exhibits an exact 2-separation $(X,Y)$ of $M_1^\prime$ where $z\in Y$, then $(X,Y\cup (E(M_2)-p))$ is an exact 2-separation of $M$ that is exhibited by $Z$, a contradiction. Thus $Z$ is 3-connected in $M_1^\prime$. 
\end{proof}

\begin{theorem}
\label{thm90}
    Let $Z$ be a $3$-connected set in a $2$-connected matroid $M$ and suppose that $|Z|\geq 4$. Then $M$ has a $3$-connected minor $N$ such that $Z\subseteq E(N)$. Moreover, the canonical tree decomposition $T$ of $M$ has a vertex that is labeled by a $3$-connected matroid $M_0$ such that $N$ can be obtained from $M_0$ by relabeling each basepoint in $E(M_0)$ by an element of $N$.
\end{theorem}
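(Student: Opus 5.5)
We work with the canonical tree decomposition $T$ of $M$ from Theorem~\ref{new}; every vertex of $T$ is labeled by a $3$-connected matroid, possibly a circuit or cocircuit of size less than four. For each edge $e$ of $T$, let $(X_e,Y_e)$ be the corresponding exact $2$-separation of $M$. By Corollary~\ref{cor1}, $Z$ does not exhibit this $2$-separation, so $|Z\cap X_e|\le 1$ or $|Z\cap Y_e|\le 1$. We orient $e$ toward the side containing at least $|Z|-1\ge 3$ elements of $Z$. Since $|Z|\ge 4$, the two sides cannot both contain at least $|Z|-1$ elements of $Z$, so the orientation is well defined. A finite tree in which every edge is oriented has a vertex that is a sink, that is, every edge incident with it points toward it; this is seen by following arrows until stopping, which must happen since paths in a tree do not revisit vertices. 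Let $M_0$ be such a sink.

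Let $e_1,\dots,e_m$ be the edges of $T$ meeting $M_0$, with basepoints $p_1,\dots,p_m\in E(M_0)$. Deleting $e_i$ leaves a branch $T_i$ not containing $M_0$, and the sink property says $|Z\cap E_i|\le 1$, where $E_i$ is the set of elements of $M$ lying in labels of $T_i$. Writing $M$ as an iterated $2$-sum, we have
\[M = M_0\oplus_2 N_1\oplus_2\cdots\oplus_2 N_m,\]
where $N_i$ is the matroid obtained by contracting all edges of $T_i$, and $N_i$ is glued along $p_i$. Each $N_i$ is $2$-connected.

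We now process the branches one at a time, keeping $Z$ a $3$-connected set in a $2$-connected minor of $M$. If $Z\cap E_i=\{z_i\}$, we apply Lemma~\ref{newtwo}(ii) to the $2$-sum that splits off $N_i$. This replaces $N_i$ by relabeling $p_i$ as $z_i$, and it yields a minor in which $Z$ is still $3$-connected. If $Z\cap E_i=\emptyset$, we choose a circuit $C$ of $N_i$ through $p_i$ together with some element $f$ of $C-p_i$. Deleting $E(N_i)-C$ and contracting $C-\{p_i,f\}$ turns $N_i$ into a $2$-circuit, which effectively relabels $p_i$ by $f$. Alternatively, we can invoke Lemma~\ref{newtwo}(ii) with $Z\cup\{f\}$ replaced by $Z$; this needs care, so instead we argue directly: the resulting minor is isomorphic to the corresponding $2$-sum with $N_i$ collapsed, and $Z$ stays inside it. After all $m$ steps we obtain a minor $N$ isomorphic to $M_0$, namely $M_0$ with each $p_i$ relabeled by an element of $N$, and $Z\subseteq E(N)$. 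Then $N$ is $3$-connected because $M_0$ is, and it is a minor of $M$.

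We expect the main obstacle to be the bookkeeping in this last step. Relabeling must be done within the iterated $2$-sum so that each stage is a genuine minor of $M$ and different $p_i$ receive distinct new labels. Distinctness holds because the sets $E_i$ are pairwise disjoint and disjoint from $E(M_0)$. Checking that each step is a genuine minor uses associativity of $2$-sums along distinct basepoints, so that $N_i$ can be split off as $M=M'\oplus_2 N_i$. We also need $M_0$ to have at least four elements, so that it is not a small circuit or cocircuit and the conclusion is meaningful. This holds because $Z\subseteq E(N)$ and $|E(N)|=|E(M_0)|$, so $|E(M_0)|\ge 4$.
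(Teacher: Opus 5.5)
Your proof is correct. It rests on the same two ingredients as the paper's proof: Corollary~\ref{cor1} applied to the $2$-separation of each edge of $T$, and collapsing a branch to a $2$-circuit through its basepoint, as in Lemma~\ref{newtwo}(ii). The difference is in how the argument is organized.

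The paper inducts on $|V(T)|$. It cuts one edge at a time and discards the side meeting $Z$ in at most one element. It then needs the $3$-connectivity conclusions of Lemma~\ref{newtwo} to show that $Z$ is still $3$-connected in $M_1$ or $M_1'$, so that the induction hypothesis, and hence Corollary~\ref{cor1}, applies again. You instead orient every edge once, using only the $3$-connectivity of $Z$ in $M$ itself, and locate $M_0$ in advance as a sink. A sink exists because the $|V(T)|-1$ edges cannot give every vertex out-degree at least one.

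As a result, the invariant you state, that $Z$ stays $3$-connected in each intermediate minor, is never used. The final minor is isomorphic to $M_0$ and so is $3$-connected regardless. You can therefore drop the confusing aside about applying Lemma~\ref{newtwo}(ii) to $Z\cup\{f\}$. Treat both kinds of branch by the same direct construction, choosing a circuit of $N_i$ through $p_i$ and $z_i$ when $Z\cap E_i=\{z_i\}$.

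Your version also makes explicit a point the paper's induction leaves implicit. When the discarded side avoids $Z$, the basepoint $p$ survives as an element of $M_1$. It must eventually be relabeled by an element of the discarded side to give a genuine minor of $M$, and your handling of branches with $Z\cap E_i=\emptyset$ does exactly this. The paper's induction, in exchange, only ever deals with a single $2$-sum, so it avoids the iterated $2$-sum bookkeeping around $M_0$.
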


\begin{proof}
    We argue by induction on $|V(T)|$. If $T$ has only one vertex, then $M$ is 3-connected and we may take $N=M$. Assume the result holds if $|V(T)|<n$, and let $|V(T)|=n$. Let $p$ be an edge of $T$. Since there is a 2-separation $(X_1,X_2)$ of $M$ corresponding to $p$, we may assume that $|X_2\cap Z|\leq 1$ by Corollary~\ref{cor1}. Then $M=M_1\oplus_2M_2$ where $E(M_i) = X_i\cup p$. Let $T_1$ be the canonical tree decomposition of $M_1$. Now, as $Z$ is 3-connected in $M$, either $Z\subseteq E(M_1)$ or $Z\cap E(M_2) = \{z\}$. Thus, by Lemma~\ref{newtwo}, $Z$ is 3-connected in $M_1$, or $Z$ is 3-connected in $M_1^\prime$ where $M_1^\prime$ is obtained from $M_1$ by relabeling $p$ by $z$. Since $|V(T_1)|<|V(T)|$, the result follows by induction.
\end{proof}

\begin{lemma}
\label{lem895}
    Let $M$ be a matroid and suppose that $Z \subseteq E(M)$, that $|Z|\geq 4$, and that $e \in E(M)-Z$. Suppose $M$ is minor-minimal having $Z$ as a $3$-connected set. 
    \begin{enumerate}[label=(\roman*)]
        \item If $\si$$(M/ e)$ is $3$-connected, then $e$ is in a triangle with two elements of $Z$. 
        \item If $\co$$(M\ba e)$ is $3$-connected, then $e$ is in a triad with two elements of $Z$. 
        \item The element $e$ is in a triangle or a triad with two elements of $Z$.
        \item The element $e$ is not on a line of $M$ or $M^*$ having more than three elements.
    \end{enumerate}
\end{lemma}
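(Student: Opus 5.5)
By Lemma~\ref{lem31}, $M$ is $2$-connected. Minimality means that $Z$ is not $3$-connected in $M/e$ or in $M\ba e$. By Lemma~\ref{dual}, $Z$ is $3$-connected in $M^*$ and $M^*$ is also minor-minimal, so (ii) follows from (i) applied to $M^*$. Likewise, the cocircuit half of (iv) follows from the line half by duality. The plan is therefore to prove (i) directly, then (iii), and then (iv).

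For (i), suppose $\si(M/e)$ is $3$-connected. The elements of $Z$ are pairwise distinct in $\si(M/e)$ unless two of them, $z_1$ and $z_2$, are parallel in $M/e$. In that case $\{e,z_1,z_2\}$ is a triangle of $M$, since $z_1,z_2$ are not parallel or loops in $M$ ($Z$ is $3$-connected with $|Z|\ge 4$, so no $2$-element subset of $Z$ forms a circuit or cocircuit exhibiting a separation). If there is no such pair, I embed $Z$ into $\si(M/e)$, choosing representatives so that each element of $Z$ represents its own parallel class; this is possible because the elements of $Z$ lie in distinct classes. Then $\si(M/e)$ is a $3$-connected minor of $M/e$ whose ground set contains $Z$. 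By Lemma~\ref{newone}, $Z$ is $3$-connected in $\si(M/e)$. The argument of Lemma~\ref{thm79} (that $\lambda$ of the restriction to a minor is at most $\lambda$ in the big matroid) then shows that $Z$ is $3$-connected in $M/e$. This contradicts minimality. Loops of $M/e$ are handled the same way: $e$ is not a loop, and $M/e$ having loops from parallel pairs with $e$ is harmless. So some pair of $Z$ is parallel in $M/e$, which gives (i).

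For (iii), I use the same mechanism without assuming $\si(M/e)$ is $3$-connected. Since $Z$ is not $3$-connected in $M/e$, either $Z$ fails to be $2$-connected there, or $Z$ exhibits a $2$-separation $(X,Y)$ of $M/e$ (Corollary~\ref{cor1}). Lifting to $M$, $\lambda_M(X\cup e)\le \lambda_{M/e}(X)+1$. Because $Z$ is $3$-connected in $M$ and $|Z\cap X|,|Z\cap Y|\ge 2$, we must have $\lambda_M(X)=\lambda_M(Y)=2$ with $e\in\mathrm{cl}(X)\cap\mathrm{cl}(Y)$ but $e\notin \mathrm{cl}^*$-compatible positions. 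Symmetrically, there is a $2$-separation $(X',Y')$ of $M\ba e$ exhibited by $Z$ with $e\in \mathrm{cl}^*_M(X')\cap \mathrm{cl}^*_M(Y')$. I would then cross these two separations. Uncrossing via submodularity on $X\cap X'$ and the like, together with the fact that no side meeting $Z$ in at least two elements can have $\lambda\le 1$ in $M$, should force one quadrant to have $\lambda=1$ in $M/e$ or $M\ba e$ with exactly two elements of $Z$ and no other elements. That quadrant is a pair $\{z_1,z_2\}$ that, together with $e$, is a triangle or a triad. This crossing analysis is the main obstacle. I would first try the standard argument that in $M$, $e$ is in the guts of one separation and the coguts of the other, and then apply submodularity to the quadrants to isolate a small set. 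If that fails, an alternative is to reduce via Theorem~\ref{thm90} to the $3$-connected tree vertex $M_0$ and use Bixby's lemma there. Bixby's lemma says that $\si(M_0/e)$ or $\co(M_0\ba e)$ is $3$-connected, so (i) or (ii) applied in the corresponding setting gives the triangle or triad. This alternative requires that minimality pass down to $M_0$, and I expect $M=N$ essentially, by minimality.

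For (iv), suppose $e$ lies on a line $L$ of $M$ with $|L|\ge 4$. By (iii), $e$ is in a triangle or a triad with two elements of $Z$. A triangle $\{e,z_1,z_2\}$ must lie on $L$, since $e$ and a second point span the line; a triad meeting the $4$-point line $L$ in $e$ alone is impossible by orthogonality. The next step is to show that $M\ba e$ still has $Z$ $3$-connected. Any $2$-separation $(X,Y)$ of $M\ba e$ exhibited by $Z$ has $L-e$, which contains at least three points, spanning $e$. At least two of those points lie on one side, say $X$, so $e\in\mathrm{cl}(X)$ and $(X\cup e,Y)$ is a $2$-separation of $M$ exhibited by $Z$. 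This contradicts the assumption that $Z$ is $3$-connected in $M$, so $Z$ is $3$-connected in $M\ba e$, contradicting minimality. Dually, $e$ does not lie on a line of $M^*$ with more than three elements.
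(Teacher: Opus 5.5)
Your arguments for (i), (ii) and (iv) follow the paper's proof, but (iii) is not proved. The crossing argument is only a plan (``should force one quadrant\ldots''), and you leave it open. Nothing in it as written produces the triangle or triad.

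The missing idea is that $M$ itself is $3$-connected, and this is immediate. By Lemma~\ref{lem31}, $M$ is $2$-connected, so Theorem~\ref{thm90} gives a $3$-connected minor $N$ of $M$ with $Z\subseteq E(N)$. By Lemma~\ref{newone}, $Z$ is $3$-connected in $N$, so minimality forces $N=M$. Your concern that ``minimality must pass down to $M_0$'' is misplaced: you only need $N$ to be a minor of $M$ in which $Z$ is $3$-connected. Once $M$ is $3$-connected, Bixby's Lemma applied to $e$ in $M$ says that $\si(M/e)$ or $\co(M\ba e)$ is $3$-connected, and (i) or (ii) then gives (iii) at once. This is exactly the paper's route, which is why the paper establishes $3$-connectivity of $M$ in its first line.

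Proving $3$-connectivity first also repairs two loose ends.
\begin{enumerate}[label=(\alph*)]
  \item In (i), if $e$ were parallel to some $z\in Z$, then $z$ becomes a loop of $M/e$ and disappears from $\si(M/e)$, so your embedding of $Z$ into $\si(M/e)$ fails. This case is not ``harmless''; it must be excluded, either by simplicity of $M$ or by checking that $Z$ would then be $3$-connected in $M\ba e$.
  \item In (iv), the step ``two points of $L-e$ on one side span $e$'' needs those two points to be non-parallel, which again uses that $M$ is simple. You should also handle the case where $Z$ is not $2$-connected in $M\ba e$; the same lifting argument deals with it.
\end{enumerate}
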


    \begin{proof}
        By Lemma~\ref{lem31} and Theorem~\ref{thm90}, $M$ is 3-connected. If si$(M/e)$ is 3-connected, then the minimality of $M$ implies that $M$ has a triangle containing two elements of $Z$. Thus (i) holds, and (ii) follows by duality. Part (iii) follows by Bixby's Lemma.

        Finally, suppose $e$ is on a line $L$ of $M$ having at least four elements. If $(X,Y)$ is a 2-separation of $M\ba e$, then, without loss of generality, $|X\cap L|\geq 2$. Thus $(X\cup e, Y)$ is a 2-separation of $M$. This contradiction and duality complete the proof of (iv).
    \end{proof}

The next theorem proves the main result for $|Z|=4$.

\begin{theorem}
\label{lem91}
    Suppose $M$ is a minor-minimal matroid having $Z$ as a $3$-connected set where $2\leq |Z|\leq 4$. Then $|E(M)-Z|\leq 2|Z|-4$. Moreover, equality holds exactly when $|Z|=2$ and $M$ is isomorphic to $U_{1,2}$ and when $|Z|=4$ and $M$ is isomorphic to $M(\mathcal{W}_4)$ or $\mathcal{W}^4$.
\end{theorem}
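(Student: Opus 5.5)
The plan is to split by $|Z|$. For $|Z|\in\{2,3\}$, the set $Z$ is $3$-connected, hence $2$-connected. By Corollary~\ref{lem84}, $M$ has a $3$-connected minor $N$ with $E(N)=Z$, and by Lemma~\ref{thm79} the set $Z$ is $3$-connected in $N$. Minimality then forces $M=N$, so $E(M)=Z$. Thus $|E(M)-Z|=0\le 2|Z|-4$, with equality only when $|Z|=2$.

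When $|Z|=2$, the matroid $M$ is a $2$-element $2$-connected matroid. It is therefore $U_{1,2}$ (a $2$-circuit) or its dual. Since a cocircuit $U_{1,2}$ is self-dual as $U_{1,2}$, both options give $U_{1,2}$, and this is the equality case. When $|Z|=3$, we have $0<2$, so equality fails.

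Now let $|Z|=4$. By Lemma~\ref{lem31} and Theorem~\ref{thm90}, together with minimality, $M$ is $3$-connected. If some proper minor of $M$ with ground set containing $Z$ were $3$-connected, Lemma~\ref{thm79} would contradict minimality. So $M$ is a $3$-connected matroid with no $3$-connected proper minor containing $Z$, and each $e\in E(M)-Z$ has neither $M\ba e$ nor $M/e$ keeping $Z$ $3$-connected.

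By Lemma~\ref{lem895}(iii),(iv), every $e\in E(M)-Z$ lies in a triangle or a triad of $M$ with two elements of $Z$, and it lies on no long line or coline. There are $\binom{4}{2}=6$ pairs from $Z$. A pair $\{z,z'\}$ spans at most one extra element in a triangle, since a second would give a $4$-point line. Likewise it determines at most one extra element via a triad. This crude count gives $|E(M)-Z|\le 12$, so it must be sharpened.

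The sharpening will use orthogonality: a triangle $\{z_1,z_2,e\}$ and a triad $\{z_3,z_4,f\}$ meeting in one element are impossible. I would then show that at most four elements lie outside $Z$. The approach is to suppose $|E(M)-Z|\ge 5$ and use Seymour's Splitter Theorem / Tutte's Wheels and Whirls Theorem: a $3$-connected $M$ with $|E(M)|\ge 4$ that is not a wheel or whirl has an element $e$ with $M\ba e$ or $M/e$ $3$-connected.

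The delicate point, and the main obstacle, is getting such an $e$ outside $Z$, ideally choosing it to avoid $Z$. My first attempt would be the Splitter Theorem relative to a $3$-connected minor. Since $M$ has no $3$-connected proper minor containing $Z$, I would instead consider chains of $3$-connected minors of $M$ and use Bixby's Lemma repeatedly.

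An alternative route is to analyze triangles and triads directly. Each non-$Z$ element is in a triangle or triad through two elements of $Z$, and these must interlock, for example through fans, since the elements cannot be removed. With $|Z|=4$, I would show the structure forms a fan or wheel on which $Z$ occupies alternate positions. Small cases, such as $|E(M)|\le 8$ with rank at most $4$, can be checked by hand.

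Finally, for the equality case $|E(M)|=8$, I would argue that every element outside $Z$ is in exactly the local configurations of a rank-$4$ wheel or whirl. Each pair of consecutive spokes in $Z$ is then completed by a rim element in a triangle, and triads appear similarly. Tutte's Wheels and Whirls Theorem, applied after showing that no single-element deletion or contraction of $M$ is $3$-connected, identifies $M$ as $M(\mathcal{W}_4)$ or $\mathcal{W}^4$ with $Z$ the spokes. One then verifies directly that in each of these, $Z$ is $3$-connected and minimality holds.
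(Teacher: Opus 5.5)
Your treatment of $|Z|\le 3$ is correct and matches the paper. One citation slip: the fact that $Z$ is $3$-connected in a $3$-connected minor containing it comes from Lemma~\ref{newone}, not Lemma~\ref{thm79}, which goes the other way. The genuine gap is the case $|Z|=4$, which is the real content of the theorem. After the crude bound $|E(M)-Z|\le 12$ you only list strategies, and the one you develop cannot work.

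Tutte's Wheels-and-Whirls Theorem applied to $M$ gives an element $e$ with $M\ba e$ or $M/e$ $3$-connected, but with no control over whether $e\in Z$, and elements of $Z$ may well be removable. An element outside $Z$ with this property is exactly what minimality forbids, so your ``main obstacle'' is equivalent to the theorem itself, not a technical step. The Splitter Theorem does not help either: no proper $3$-connected minor contains $Z$, and a minor not containing $Z$ gives no control over where the removable element lies. The same circularity affects your equality step, where you would need to know in advance that no element of $Z$ is removable. Finally, your orthogonality example is not one: for $e\neq f$, the triangle $\{z_1,z_2,e\}$ and the triad $\{z_3,z_4,f\}$ are disjoint, and this configuration needs a separate argument.

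What is missing is a structural analysis of how triangles and triads through pairs of $Z$ can coexist. The paper does this in four steps.

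(1) No pair $\{z_s,z_t\}$ lies both in a triad $\{a,z_s,z_t\}$ and in a triangle $\{b,z_s,z_t\}$ with $a,b\in E-Z$. Bixby's Lemma and orthogonality put $a$ in a triangle $\{a,z_1,z_3\}$, and dually $b$ lies in a triad with an element of $\{z_1,z_2\}$ and some $z_i\notin\{z_1,z_2\}$. Either a $5$-element set with $\lambda\le 1$ forces $|E(M)|\le 6$, or one gets a fan $(z_3,a,z_1,z_2,b,z_4)$. In the latter case at least two elements of the flat $E-\{a,z_1,z_2,b,z_4\}$ are forced into triads with $\{z_3,z_4\}$, giving a $4$-point line of $M^*$ contrary to Lemma~\ref{lem895}(iv).

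(2) A triad through one pair and a triangle through another cannot coexist. Orthogonality and (1) force the pairs to be $\{z_1,z_2\}$ and $\{z_3,z_4\}$. Then every element of $E-Z$ lies on $\mathrm{cl}(\{z_3,z_4\})$ or $\mathrm{cl}^*(\{z_1,z_2\})$, so $|E-Z|\le 2$. Hence, up to duality, every element of $E-Z$ is in a triangle with two elements of $Z$.

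(3) $r(Z)=4$. When $r(Z)=3$, the set $Z$ together with at most three elements of $E-Z$ already gives a $3$-connected restriction, so minimality yields $|E-Z|\le 3$.

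(4) Form the simple graph on $Z$ whose edges are the elements of $E-Z$, with $e$ joining $z_i$ and $z_j$ when $\{e,z_i,z_j\}$ is a triangle. By $3$-connectivity and minimality this graph must be a $4$-cycle, which gives both the bound and $M\cong M(\mathcal{W}_4)$ or $\mathcal{W}^4$.

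Your proposal contains none of these steps or any substitute for them.
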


\begin{proof}
    If $|Z| \in \{2,3\}$, then, by Corollary~\ref{lem84}, $E(M) = Z$ and the result holds. Hence we may assume that $|Z|=4$. Thus, by Theorem~\ref{thm90}, $M$ is 3-connected. Hence $r(Z)\geq 2$. If $r(Z) = 2$, then $M|Z \cong U_{2,4}$ and the result holds since $M= M|Z$. 
    Thus we may assume that $r(Z)\geq 3$. Now, as $M$ is simple, $M|Z\cong U_{2,3}\oplus U_{1,1},\ U_{3,4},$ or $U_{4,4}$. Assume that $M$ is a counterexample to the theorem and that $Z=\{z_1,z_2,z_3,z_4\}$.

    \begin{sublemma}
    \label{sub0}
        $E-Z$ has no elements $a$ and $b$ such that $\{a,z_s,z_t\}$ is a triad and $\{b,z_s,z_t\}$ is a triangle where $\{s,t\} \subseteq \{1,2,3,4\}$. 
    \end{sublemma}

    Suppose that $E-Z$ does have such elements $a$ and $b$ and that $\{s,t\} = \{1,2\}$. Since $M$ is 3-connected and $r(M)\geq 3$, the elements $a$ and $b$ are distinct. As $\co$$(M\ba a)$ is not 3-connected, Bixby's Lemma implies that $\si$$(M/a)$ is 3-connected. Hence we may assume that $\{a,z_1\}$ is in a triangle with an element $z_3$ of $Z-\{z_1,z_2\}$. By duality, $\{b,z_1\}$ or $\{b,z_2\}$ is in a triad $T^*$ with an element $z_i$ of $Z-\{z_1,z_2\}$. Suppose $i=3$. Then, for $X=\{a,b,z_1,z_2,z_3\}$, we have $\lambda(X) = r(X) + r^*(X) - |X| \leq 3+3-5 = 1$. As $M$ is 3-connected, $|E(M)|\leq 6$, so $|E-Z|< 2|Z|-4$, a contradiction. Thus $i=4$. Since $\{a,z_1,z_3\}$ is a triangle and $\{b,z_j,z_4\}$ is a triad for some $j$ in $\{1,2\}$, we deduce, by orthogonality, that $j=2$. Thus $M$ has $(z_3,a,z_1,z_2,b,z_4)$ as a fan where $\{z_3,a,z_1\}$ is a triangle. Moreover, $r(Z)\geq 4$, otherwise $M|(Z\cup \{a,b\})$ is isomorphic to $M(K_4)$ or $\mathcal{W}^3$, in which case, $|E-Z| = 2 < 2|Z|-4$, a contradiction. 

    Now, deleting the triads $\{a,z_1,z_2\}$ and $\{z_2,b,z_4\}$ from $M$ gives a flat $F$ containing $z_3$. Since $|E-Z|\geq 4$, it follows that $|F-z_3|\geq 2$. Suppose an element $c$ of $F-z_3$ is in a triangle $T$ with two elements of $Z$. By orthogonality with the triads $\{a,z_1,z_2\}$ and $\{z_2,b,z_4\}$, none of $z_1, z_2,$ or $z_4$ is in $T$, a contradiction. Thus, by Lemma~\ref{lem895} (iii), $c$ is in a triad $T^*$ with two elements of $Z$. The triangles $\{z_1,a,z_3\}$ and $\{z_1,z_2,b\}$ imply that $z_1\not\in T^*$. Moreover, $z_2\not\in T^*$ otherwise, by orthogonality, $b\in T^*$ and we obtain the contradiction that $\{z_2,z_4\}$ is in a 4-point line in $M^*$. Thus $\{z_3,z_4\} \subseteq T^*$. Since $c$ was arbitrarily chosen in $F-z_3$, it follows that $M^*$ has a 4-point line containing $\{z_3,z_4\}$, a contradiction. We conclude that \ref{sub0} holds. 

    \begin{sublemma}
        \label{sub00}
        $E-Z$ has no distinct elements $a$ and $b$ such that $\{a,z_s,z_t\}$ is a triad and $\{b,z_u,z_v\}$ is a triangle where $s,t,u,v\in \{1,2,3,4\}$.
    \end{sublemma}

    Assume that such elements exist and take $\{s,t\} = \{1,2\}$. By \ref{sub0}, $\{u,v\} \neq \{1,2\}$. By orthogonality, $\{u,v\} = \{3,4\}$. By Lemma~\ref{lem895} (iii), every element of $E-Z$ is on the line of $M$ spanned by $\{z_3,z_4\}$ or the line of $M^*$ spanned by $\{z_1,z_2\}$. Thus, by Lemma~\ref{lem895} (iv), $|E-Z|\leq 2 <2|Z|-4$, a contradiction. Hence \ref{sub00} holds. 

    By \ref{sub00} and duality, we may assume that every element of $E-Z$ is in a triangle with two elements of $Z$.

    \begin{sublemma}
        \label{sub1}
        $r(Z)=4.$
    \end{sublemma}

    Assume that $r(Z)=3$. Then $r(M) = 3$. Suppose first that $M|Z\cong U_{2,3}\oplus U_{1,1}$ where $\{z_1,z_2,z_3\}$ is a triangle. By Lemma~\ref{lem895} (iv), we may assume that $E-Z$ contains elements $a$ and $b$ such that $\{a,z_4,z_1\}$ and $\{b,z_4,z_2\}$ are triangles. Then $M|(Z\cup \{a,b\})$ is 3-connected and so equals $M$. This is a contradiction as $|E-Z| = 2<2|Z|-4$. Thus $M\ncong U_{2,3} \oplus U_{1,1}$. 
    
    We now know that $M|Z \cong U_{3,4}$. As $|E-Z|\geq 4$, there are distinct elements $a,b,$ and $c$ of $E-Z$ such that $\{a,z_p,z_q\}$, $\{b,z_s,z_t\}$, and $\{c,z_u,z_v\}$ are triangles where $\{p,q\}$, $\{s,t\}$, and $\{u,v\}$ are distinct 2-element subsets of $\{1,2,3,4\}$. Then $M|(Z\cup\{a,b,c\})$ is 3-connected and we have a contradiction. Hence \ref{sub1} holds.

    By \ref{sub1}, $M|Z\cong U_{4,4}$ and every element of $M$ is in a triangle with two elements of $Z$. Construct an auxiliary graph $G$ with $V(G)=Z$ and $E(G) = E-Z$ where an element $e$ of $E-Z$ joins vertices $z_i$ and $z_j$ if and only if $\{e,z_i,z_j\}$ is a triangle. Then $G$ is well defined and is simple. As $M$ is 3-connected and minor-minimal having $Z$ as a 3-connected set, $G$ must be a 4-cycle. It follows that $M$ is isomorphic to $M(\mathcal{W}_4)$ or $\mathcal{W}^4$. This completes the proof of the theorem. 
\end{proof}

The proof of Theorem~\ref{thm35} will use the following results of Leo \cite[Theorem~1.1]{Leo} and Akkari and Oxley \cite[Theorem~5.1]{Ak}. Suppose $\triangle$ is a triangle in both $M(K_4)$ and in a matroid $N$. Then $P_\triangle(M(K_4),N)$ denotes the generalized parallel connection of $M(K_4)$ and $N$ across the triangle $\triangle$. 

\begin{theorem}
\label{thm93}
    Suppose that $M$ is a $3$-connected matroid with at least four elements. Let $C$ be a circuit of $M$ and $f$ be an element of $C$. If $M\ba e$ is not $3$-connected for all $e$ in $C-f$, then $C$ meets a triad of $M$.
\end{theorem}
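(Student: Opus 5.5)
The plan is a Tutte-style uncrossing argument by contradiction. Suppose that $C$ meets no triad of $M$, and that $M\ba e$ is not $3$-connected for every $e$ in $C-f$. Since $M$ is $3$-connected with at least four elements, it is simple and cosimple.

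First I would record the basic facts about a single deletion. Fix $e\in C-f$ and let $(X,Y)$ be a $2$-separation of $M\ba e$. Then $e\notin \mathrm{cl}(X)$ and $e\notin \mathrm{cl}(Y)$, for otherwise $(X\cup e,Y)$ or $(X,Y\cup e)$ would be a $2$-separation of $M$. Because $C-e$ spans $e$, it follows that $C-e$ meets both $X$ and $Y$.

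Next I would show that both sides have at least three elements. Suppose $|X|=2$. Since $M$ is simple and cosimple, $X$ is neither a parallel pair nor a coloop-containing set, so $\lambda_{M\ba e}(X)\le 1$ forces $X$ to be a series pair of $M\ba e$. Then $X\cup e$ is a triad of $M$, and it meets $C$ in $e$, a contradiction. Hence $|X|,|Y|\ge 3$, and indeed every $2$-separating set of $M\ba e$ that meets $C$ has at least three elements.

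Now I would make an extremal choice. Among all pairs $(e,(X,Y))$ with $e\in C-f$ and $(X,Y)$ a $2$-separation of $M\ba e$, we may assume $f\in Y$. Choose the pair so that $|X|$ is minimal. Since $C-e$ meets $X$, we can pick $g\in (C-e)\cap X$, and $g\neq f$ because $f\in Y$. By hypothesis $M\ba g$ has a $2$-separation $(X',Y')$ with, say, $e\in Y'$ after swapping sides if needed. Both $g$ and $e$ then play symmetric roles, and I would apply submodularity of the connectivity function in $M\ba\{e,g\}$ to the two separations.

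The key inequality is
$$\lambda(X\cap X')+\lambda(Y\cap Y')\le \lambda(X)+\lambda(X')=2,$$
together with its companion for $X\cap Y'$ and $Y\cap X'$. I would then use the closure facts $g\notin\mathrm{cl}(X')$, $g\notin\mathrm{cl}(Y')$ to add $g$ back to a suitable intersection. The aim is to produce a $2$-separation of $M\ba e$ with one side a proper subset of $X$ containing no $f$, or a $2$-separation of $M$ itself. In every case this contradicts the minimal choice or the $3$-connectivity of $M$.

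The main obstacle is the degenerate case of this uncrossing, where the relevant intersection, such as $X\cap X'$ or $X\cap Y'$, has at most one element. There submodularity does not directly yield a genuine $2$-separation. I would handle it by counting: it forces small $2$-separating sets, of size $2$ in $M\ba g$ or $M\ba e$, that contain an element of $C$. By the second step these yield a triad of $M$ meeting $C$, again a contradiction. Checking that $C$ really meets the small set in each subcase, using that $C-e$ meets both sides of each separation, is where I expect the casework to concentrate.
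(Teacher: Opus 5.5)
The paper does not prove this statement; it is quoted from Leo, so your argument has to stand on its own. Your first three steps are correct. For $e\in C-f$, every $2$-separation $(X,Y)$ of $M\ba e$ has $e\notin\mathrm{cl}(X)\cup\mathrm{cl}(Y)$, and both sides have at least three elements, since otherwise $X\cup e$ is a triad meeting $C$. The extremal choice is also legitimate. The gap is that the whole contradiction is deferred to the uncrossing, which you describe only as an aim. Moreover, the inequality you name does not do the job as written. From $\lambda_{M\ba\{e,g\}}(X\cap X')+\lambda_{M\ba\{e,g\}}(Y\cap Y')\le 2$ you cannot conclude that $X\cap X'$ is $2$-separating in $M\ba g$, for two reasons:
\begin{enumerate}[label=(\roman*)]
\item $\lambda_{M\ba\{e,g\}}(Y\cap Y')$ may be $0$;
\item restoring $e$ can raise the connectivity of $X\cap X'$ by one.
\end{enumerate}
The facts $g\notin\mathrm{cl}(X')\cup\mathrm{cl}(Y')$ that you plan to use control neither effect. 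The degenerate case is also not settled by counting.

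What closes the argument is submodularity of the rank function of $M$ itself, using $r(X)+r(Y)=r(X')+r(Y')=r(M)+1$ and $r(M\ba\{e,g\})=r(M)$. Pairing $X$ with $X'$ and $Y$ with $Y'$ uses $X\cup X'=E-e-(Y\cap Y')$ and $Y\cup Y'=E-g-(X\cap X')$. Pairing $X$ with $Y'$ and $Y$ with $X'$ uses $X\cup Y'=E-(Y\cap X')$ and $Y\cup X'=E-\{e,g\}-(X\cap Y')$. Together these give
\[
\lambda_{M\ba g}(X\cap X')+\lambda_{M\ba e}(Y\cap Y')\le 2 \quad\text{and}\quad \lambda_{M}(Y\cap X')+\lambda_{M\ba\{e,g\}}(X\cap Y')\le 2 .
\]
Three consequences follow.
\begin{enumerate}[label=(\roman*)]
\item Since $M\ba e$ is connected, $\lambda_{M\ba e}(Y\cap Y')\ge 1$ unless $Y\cap Y'=\emptyset$.
\item If $|Y\cap X'|\ge 2$, then $\lambda_M(Y\cap X')\ge 2$, so $X\cap Y'$ is a separator of $M\ba\{e,g\}$ and hence $\lambda_{M\ba g}(X\cap X'')\le 1$ with $X''=Y'$, that is, $\lambda_{M\ba g}(X\cap Y')\le 1$.
\item If $X\cap X'$ or $X\cap Y'$ is $2$-separating in $M\ba g$ and has at least two elements, minimality is contradicted, because it avoids $f$ and is smaller than $X$. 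These smaller separations live in $M\ba g$, not $M\ba e$, which is fine because you minimized over all of $C-f$.
\end{enumerate}
The case split is as follows, using $|X'|,|Y'|\ge 3$ from your second step applied to $g$.
\begin{enumerate}[label=(\roman*)]
\item If $|X\cap X'|\ge 2$, then $Y\cap Y'=\emptyset$. Hence $|Y\cap X'|=|Y|\ge 3$ and $|X\cap Y'|=|Y'|-1\ge 2$, which is a contradiction.
\item If $|X\cap X'|\le 1$, then $|Y\cap X'|\ge |X'|-1\ge 2$, so $X\cap Y'$ is a separator of $M\ba\{e,g\}$. It is nonempty because $|X|\ge 3$, and it cannot have two or more elements. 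So it is a single element $q$, which is a coloop of $M\ba\{e,g\}$. By cosimplicity, $\{e,g,q\}$ is a triad of $M$ meeting $C$.
\end{enumerate}
So your intuition about a series pair $\{e,q\}$ in $M\ba g$ is right, but it comes from the second inequality, not from counting. With these two inequalities and this case split, your outline becomes a complete proof.
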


\begin{theorem}
\label{thm8}
    Let $e$ and $f$ be distinct elements of a $3$-connected matroid $M$. Suppose that $\{e,f,a\}$ is a triangle, $\{e,f,b\}$ is a triad, and $M$ is not isomorphic to $U_{2,4}$. Then $M=P_\triangle(M(K_4),M^\prime)\ba \{e^\prime, f^\prime\}$ where $\triangle = \{a,e^\prime,f^\prime\}$, the other triangles of $M(K_4)$ are $\{a,e,f\}$, $\{b,e,e^\prime\}$, and $\{b,f,f^\prime\}$, and $M^\prime$ is obtained from $M/b$ by relabeling $e$ and $f$ as $e^\prime$ and $f^\prime$, respectively.
\end{theorem}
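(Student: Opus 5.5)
The plan is to construct the matroid $N=P_\triangle(M(K_4),M')$ explicitly and show, by comparing rank functions, that $N\ba\{e',f'\}=M$. The comparison splits into the contraction $M/b$ and the deletion $M\ba b$.

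\textbf{Step 1: the construction makes sense.} First I check that $\triangle=\{a,e',f'\}$ is a triangle of $M'$, that is, that $\{a,e,f\}$ is a triangle of $M/b$ and that $b$ is not a coloop of $M$. Since $b\in\{e,f,b\}$, a triad, and $M$ is $3$-connected with at least four elements, $b$ is not a coloop and $r(M/b)=r(M)-1$. The set $\{a,e,f\}$ stays dependent in $M/b$. It has no loops or parallel pairs in $M/b$ unless $b\in \mathrm{cl}_M(\{a,e,f\})$, in which case $\{a,b,e,f\}$ is a $4$-point line containing the triad $\{e,f,b\}$. By orthogonality and $3$-connectivity that line would then be all of $M$, forcing $M\cong U_{2,4}$, which is excluded.

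So $N$ is defined, and $r(N)=r(M')+3-2=r(M)$. The modularity of a triangle in $M(K_4)$ guarantees the generalized parallel connection exists. Moreover $E(N)-\{e',f'\}=E(M)$. Write $\hat N=N\ba\{e',f'\}$.

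\textbf{Step 2: agreement after contracting $b$.} In $M(K_4)/b$ the element $e$ is parallel to $e'$ and $f$ is parallel to $f'$. By the standard commutation of contraction with generalized parallel connection for elements off the modular flat, $N/b=P_\triangle(M(K_4)/b,M')$. Hence $N/b$ is $M'$ with $e,f$ added in parallel to $e',f'$. Deleting $e',f'$ and undoing the relabeling gives $\hat N/b=M/b$. Consequently $r_{\hat N}(X)=r_M(X)$ for every $X\ni b$.

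\textbf{Step 3: agreement for sets avoiding $b$.} This is the main obstacle, and I would do it in cases. Let $H=E(M)-\{b,e,f\}$, which is a closed hyperplane of $M$ because it is the complement of a triad. For $X\subseteq H$ we have $b\notin \mathrm{cl}_M(X)$, so $r_M(X)=r_{M/b}(X)$. The same holds in $\hat N$, because $H$ is a hyperplane there as well: $\{e,f,b\}$ is a triad of $\hat N$, since $\{b,e,e'\}$ and $\{b,f,f'\}$ are triangles of $N$. Together with Step 2 this settles all $X\subseteq H$.

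Now suppose $X\not\ni b$ meets $\{e,f\}$.
\begin{enumerate}[label=(\roman*)]
\item If $\{e,f\}\subseteq X$, then $r(X)=r(X\cup a)=r((X-f)\cup a)$ in both matroids, since $\{a,e,f\}$ is a triangle in both. This reduces to the next case.
\item If $X$ contains exactly one of them, say $e$, let $Y=X-e\subseteq H$. In both matroids $r(X)=r(Y)+1$ unless $e\in\mathrm{cl}(Y)$. So it suffices to show that $e\in\mathrm{cl}_M(Y)$ if and only if $e\in\mathrm{cl}_{\hat N}(Y)$.
\end{enumerate}

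To prove this, note that since $Y$ lies in the hyperplane $H$, we have $e\in\mathrm{cl}_M(Y)$ if and only if $e\in\mathrm{cl}_{M/b}(Y)$ and $b\notin\mathrm{cl}_M(Y\cup e)$. The first condition is data of $M/b$, hence shared with $\hat N$ by Step 2. For the second, I would use the cocircuit $\{e,f,b\}$ together with the circuits through $e$ inside $Y\cup e$. Orthogonality with the triad forbids a circuit meeting $\{e,f,b\}$ in exactly $e$. Hence $e\in\mathrm{cl}_M(Y)$ is impossible, and symmetrically $e\notin\mathrm{cl}_{\hat N}(Y)$. So for such $X$ both ranks equal $r(Y)+1=r_{M/b}(Y)+1$.

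Steps 2 and 3 give $r_{\hat N}=r_M$ on all subsets of $E(M)$, proving $M=\hat N$. The delicate point I expect is Step 2, namely verifying that contraction of $b$ commutes with the generalized parallel connection. I would verify this via the flat description: the flats of $P_\triangle(M_1,M_2)$ are exactly the sets meeting each side in a flat.
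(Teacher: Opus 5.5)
The paper does not prove this statement. It quotes it from Akkari and Oxley~\cite[Theorem~5.1]{Ak} and uses it as a black box, so your argument has to stand on its own. It essentially does. Your key observation is correct: a matroid on $E(M)$ is determined by $M/b$, the fact that $H=E(M)-\{b,e,f\}$ is a flat avoiding $b$, and the triangle $\{a,e,f\}$. Sets containing $b$ get their rank from $M/b$, and subsets of $H$ have the same rank as in $M/b$. Adding one of $e,f$ to a subset of $H$ raises the rank by one, and $\{e,f\}$ can be traded for $\{a,e\}$. This gives an elementary, self-contained proof where the paper relies on a citation. Two of your justifications need repair, however.

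\textbf{Step 2.} The set $\triangle$ is not a flat of $M(K_4)/b$: its closure there is all of $E(M(K_4))-b$, since $e,f$ become parallel to $e',f'$. So the commutation $N/b=P_\triangle(M(K_4)/b,M')$ is not covered by the usual statement, which requires a modular flat. It is easier to argue directly.
\begin{enumerate}[label=(\roman*)]
\item By the flat description, $E(M')$ is a flat of $N$, because it meets $E(M(K_4))$ in the flat $\triangle$. Its rank is $r(M')=r(N)-1$.
\item Hence $b$ is a coloop of $N\ba\{e,f\}$, and $N\ba\{e,f\}/b=N|E(M')=M'$.
\item Since $\{b,e,e'\}$ and $\{b,f,f'\}$ are triangles of $N|E(M(K_4))=M(K_4)$, the pairs $\{e,e'\}$ and $\{f,f'\}$ are parallel in $N/b$. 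Therefore $\hat N/b=M/b$.
\end{enumerate}

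\textbf{Step 3.} The triangles $\{b,e,e'\}$ and $\{b,f,f'\}$ do not show that $\{b,e,f\}$ is a triad of $\hat N$, because they involve the deleted elements $e',f'$. The correct reason is the fact from Step 2: $E(M')$ is a flat of $N$, so $H=E(M')-\{e',f'\}$ is a flat of $\hat N$ avoiding $b,e,f$. That is all Step 3 actually uses. It also makes case (ii) immediate: $e\notin\mathrm{cl}(Y)\subseteq H$ in both matroids, so you do not need the detour through $\mathrm{cl}_{M/b}$ and orthogonality.

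With these fixes, Steps 1--3 give $r_{\hat N}=r_M$ on all subsets, as you claim.
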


\begin{lemma}
    \label{newlem}
    In a $3$-connected matroid $N$, suppose $(z_2,a_1,z_1,a_2,z_3)$ is a fan in which $\{z_2,a_1,z_1\}$ is a triangle. Suppose $N$ also has $\{a_1,a_2,e\}$ and $\{e,z_2,z_3\}$ as triangles. If $|E(N)|>6$, then $e$ is not in a triad of $N$, and $N\ba e$ is $3$-connected.
\end{lemma}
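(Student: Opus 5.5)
The plan is to analyse the six-element set $X=\{z_1,z_2,z_3,a_1,a_2,e\}$, derive both conclusions from low-connectivity contradictions, and use $|E(N)|>6$ exactly to make the relevant sets into genuine $2$-separations.

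\textbf{Structure of $X$.} The fan hypothesis gives the triangles $\{z_2,a_1,z_1\}$ and $\{z_1,a_2,z_3\}$ and the triad $\{a_1,z_1,a_2\}$. We are also given the triangles $\{a_1,a_2,e\}$ and $\{e,z_2,z_3\}$. Each of $z_2,z_3,e$ lies in $\mathrm{cl}(\{z_1,a_1,a_2\})$, so $r(X)\le 3$. The four triangles are exactly the lines of $M(K_4)$, so $N|X\cong M(K_4)$. This structure is the source of the symmetries used below.

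\textbf{Step 1: $e$ is in no triad.} Suppose $T^*$ is a triad containing $e$. Orthogonality with the triangles $\{a_1,a_2,e\}$ and $\{e,z_2,z_3\}$ forces $T^*=\{e,a_i,z_j\}$ for some $i\in\{1,2\}$ and $j\in\{2,3\}$. Using the symmetry of the configuration that swaps $a_1\leftrightarrow a_2$ and $z_2\leftrightarrow z_3$ and fixes $z_1$ and $e$, there are two cases.
\begin{itemize}
\item If $T^*=\{e,a_1,z_2\}$, let $Y=\{a_1,a_2,e,z_1,z_2\}$. The set $Y$ lies in $\mathrm{cl}(\{a_1,a_2,z_1\})$, so $r(Y)\le 3$. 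The triads $\{a_1,z_1,a_2\}$ and $\{e,a_1,z_2\}$ put $z_1,z_2$ in $\mathrm{cl}^*(\{a_1,a_2,e\})$, so $r^*(Y)\le 3$.
\item If $T^*=\{e,a_1,z_3\}$, take instead $Y=\{a_1,a_2,e,z_1,z_3\}$. The same two estimates hold: $z_1,z_3\in\mathrm{cl}(\{a_1,a_2\})$ via the triangles $\{a_1,a_2,e\}$ and $\{z_1,a_2,z_3\}$, and $z_1,z_3\in\mathrm{cl}^*(\{a_1,a_2,e\})$ via the two triads.
\end{itemize}
In either case $\lambda(Y)\le 3+3-5=1$. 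Since $|E(N)|>6$, we have $|E(N)-Y|\ge 2$, so $Y$ gives a $2$-separation of $N$, a contradiction.

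\textbf{Step 2: $N\ba e$ is $3$-connected.} Suppose $(A,B)$ is a $2$-separation of $N\ba e$.

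If $\{a_1,a_2\}$ or $\{z_2,z_3\}$ lies in one side, say $A$, then $e\in\mathrm{cl}(A)$. Hence $(A\cup e,B)$ is a $2$-separation of $N$, a contradiction. So we may assume $a_1,z_2\in A$ and $a_2,z_3\in B$; the other arrangement is symmetric.

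Consider where $z_1$ lies. We have $z_1\in\mathrm{cl}(\{z_2,a_1\})\subseteq \mathrm{cl}(A)$ and $z_1\in\mathrm{cl}(\{a_2,z_3\})\subseteq\mathrm{cl}(B)$. The element $e$ is not in a triad of $N$ by Step 1, so $\{a_1,z_1,a_2\}$ is still a triad of $N\ba e$. Thus $z_1$ also lies in the coclosure of whichever side contains $\{a_1,a_2\}$ once we shift elements.

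Say $z_1\in A$. I would move $z_1$ to $B$, which keeps $\lambda\le 1$ because $z_1\in\mathrm{cl}(B)$. Then $a_1\in\mathrm{cl}^*(\{z_1,a_2\})\subseteq\mathrm{cl}^*(B)$, so $a_1$ can be moved to $B$ as well. Now $\{a_1,a_2\}\subseteq B$, so $e$ can be added to $B$. Each move preserves $\lambda\le1$ as long as the shrinking side keeps at least two elements.

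This shrinking-side bookkeeping is the main obstacle. When the moves would reduce $A$ to fewer than two elements, $A$ is contained in $\{a_1,z_1,z_2\}$ or is a small set near $X$. I would handle this by the same kind of rank/corank count as in Step 1, using $|E(N)|>6$ to ensure the complementary side is large. The count should produce a $2$-separation of $N$, or show that $e$ lies in a triad or series pair of $N\ba e$ contrary to Step 1. Alternatively, I would first invoke Bixby's Lemma to reduce to showing that $\mathrm{co}(N\ba e)=N\ba e$, which Step 1 gives. This reduces the task to the case where $\mathrm{si}(N/e)$ is $3$-connected, which one rules out using the triangles through $e$ together with the bound $|E(N)|>6$.
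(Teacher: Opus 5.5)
Your Step 1 is correct. It is a little cleaner than the paper's count: a five-element set lets $|E(N)|>6$ give a $2$-separation at once, whereas the paper uses the six-element set and must then dispose of one leftover element. Two small slips there. The case $T^*=\{e,a_1,z_3\}$ is already excluded by orthogonality with $\{z_2,a_1,z_1\}$. Also, in that case $r(Y)\le 3$ comes from $e\in\mathrm{cl}(\{a_1,a_2\})$ and $z_3\in\mathrm{cl}(\{z_1,a_2\})$, not from $z_1,z_3\in\mathrm{cl}(\{a_1,a_2\})$.

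\textbf{The gap is Step 2, which neither of your routes proves.} The Bixby route, as stated, would fail. Bixby's Lemma only says that one of $\mathrm{si}(N/e)$ and $\mathrm{co}(N\backslash e)$ is $3$-connected, so you need $\mathrm{si}(N/e)$ not to be $3$-connected. That is false when $r(N)=3$. For example, add a point $f$ freely on the line $\{e,z_2,z_3\}$ of $M(K_4)$: the resulting $7$-element $N$ satisfies every hypothesis, yet $\mathrm{si}(N/e)\cong U_{2,3}$.

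The paper treats rank $3$ separately. There $E(N)-\{a_1,z_1,a_2\}$ is a hyperplane, hence a line through $e,z_2,z_3$ with at least four points, so $N\backslash e$ is $3$-connected. Only when $r(N)\ge 4$ does it conclude that $\mathrm{si}(N/e)$ is not $3$-connected. That step uses the fan's triad, not just the triangles through $e$ and $|E(N)|>6$: in $N/e$ the elements $a_1,a_2$ are parallel, so $\{a_1,z_1,a_2\}$ leaves a cocircuit of size at most two in $\mathrm{si}(N/e)$, which has rank at least $3$. Bixby's Lemma and Step 1 then finish.

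Your direct route would avoid both Bixby's Lemma and the rank split, and it can be completed, but as written it has two holes.
\begin{enumerate}[label=(\roman*)]
\item The arrangement $a_1,z_3\in A$, $a_2,z_2\in B$ is not symmetric to $a_1,z_2\in A$, $a_2,z_3\in B$, because your involution maps each arrangement to itself. It needs its own easy argument: the side containing $z_1$ spans the $a_i$ on the other side, via $\{z_2,a_1,z_1\}$ or $\{z_1,a_2,z_3\}$. Moving that $a_i$ across puts $\{a_1,a_2\}$ on one side, and then $e$ can be added to it.
\item The small cases you defer are where the remaining work lies. They close as follows. Suppose the side you want to shrink is a two-element set $\{x,y\}$. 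Then $\lambda_{N\backslash e}(\{x,y\})\le 1$ and simplicity give $r^*_{N\backslash e}(\{x,y\})\le 1$. So $N$ has a cocircuit inside $\{x,y,e\}$, which must be a triad containing $e$, contradicting Step 1.
\end{enumerate}
Until one of these routes is carried out, the second conclusion of the lemma is unproved.
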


\begin{proof}
    Let $X= \{z_2,a_1,z_1,a_2,z_3,e\}$. Suppose $e$ is in a triad $T^*$ of $N$. By orthogonality, $T^*\subseteq X$. Then $\lambda(X)=r(X)+r^*(X)-|X|\leq 3+3-5=1$. As $N$ is 3-connected, $|E(N)-X|\leq 1$. Say $E(N)-X = \{f\}$. As $\{a_1,z_1,a_2\}$ is a triad of $N$, it follows that $\{z_2,e,f,z_3\}$ is a line of $N$. The triad $T^*$ now gives a contradiction to orthogonality. Hence $e$ is not in a triad of $N$. If the line containing $\{e,z_2,z_3\}$ contains a fourth point, then $N\ba e$ is 3-connected. Therefore we may assume that $r(N)\geq 4$ and si$(N/e)$ is not 3-connected. Thus, as $e$ is not in a triad, Bixby's Lemma implies that $N\ba e$ is 3-connected. 
\end{proof}

\begin{figure}
    \centering
\begin{tikzpicture}[scale=0.5]
	\begin{pgfonlayer}{nodelayer}
		\node [style=Black Vertex] (0) at (-8, 0) {};
		\node [style=Black Vertex] (1) at (0, 4) {};
		\node [style=Black Vertex] (2) at (0, -4) {};
		\node [style=Black Vertex] (3) at (8.5, 4) {};
		\node [style=Black Vertex] (4) at (8, 0) {};
		\node [style=Outline] (5) at (0, -2) {};
		\node [style=Black Vertex] (6) at (4.25, 0) {};
		\node [style=Black Vertex] (7) at (-3.75, -2.25) {};
		\node [style=Black Vertex] (8) at (-3.5, 2.25) {};
		\node [style=Black Vertex] (9) at (3.25, 4) {};
		\node [style=Black Vertex] (10) at (4, -2) {};
		\node [style=Black Vertex] (11) at (4, 2) {};
		\node [style=none] (12) at (-9, 0) {$z_3$};
		\node [style=none] (13) at (9.25, 4.75) {$z_4$};
		\node [style=none] (14) at (9, 0) {$z_5$};
		\node [style=none] (15) at (0, 5) {$z_1$};
		\node [style=none] (16) at (0, -5) {$z_2$};
		\node [style=Outline] (17) at (0, 0) {};
		\node [style=Outline] (18) at (0, 2) {};
		\node [style=none] (19) at (-3.75, 3) {$a_3$};
		\node [style=none] (20) at (3.25, 5) {$a_4$};
		\node [style=none] (21) at (-4.25, -2.75) {$b_3$};
		\node [style=none] (22) at (4.75, -3) {$b_5$};
		\node [style=none] (23) at (4.25, 2.75) {$a_5$};
		\node [style=none] (24) at (5, -0.5) {$b_4$};
		\node [style=none] (25) at (5, 1.5) {};
		\node [style=none] (26) at (6, 1) {};
	\end{pgfonlayer}
	\begin{pgfonlayer}{edgelayer}
		\draw (0) to (2);
		\draw (0) to (1);
		\draw (1) to (2);
		\draw (3) to (2);
		\draw (3) to (1);
		\draw (2) to (4);
		\draw [style=Dashed Edge, in=90, out=0] (8) to (17);
		\draw [style=Dashed Edge, in=255, out=-15] (7) to (17);
		\draw [style=Dashed Edge, bend right, looseness=0.75] (9) to (18);
		\draw [style=Dashed Edge, bend right=45] (18) to (6);
		\draw [style=Dashed Edge, bend left=45, looseness=0.75] (5) to (11);
		\draw [style=Dashed Edge, bend right] (5) to (10);
		\draw (1) to (25.center);
		\draw (26.center) to (4);
	\end{pgfonlayer}
\end{tikzpicture}
    \caption{A depiction of the rank-5 matroid $N_{(1,1,1)}$.}
    \label{fig7.55}
\end{figure}

\begin{figure}
    \centering
\begin{tikzpicture}[scale=0.6]
	\begin{pgfonlayer}{nodelayer}
		\node [style=none] (0) at (-6.25, 1.5) {};
		\node [style=none] (1) at (-3.25, 4) {};
		\node [style=none] (2) at (3.75, 1.5) {};
		\node [style=none] (3) at (6.75, 4) {};
		\node [style=none] (4) at (-6.25, -3.75) {};
		\node [style=none] (5) at (-3.75, -1.75) {};
		\node [style=none] (6) at (3.75, -3.75) {};
		\node [style=none] (7) at (6.25, -1.75) {};
		\node [style=Black Vertex] (8) at (-2.5, 3) {};
		\node [style=Black Vertex] (9) at (-0.5, 2.25) {};
		\node [style=Black Vertex] (10) at (1.25, 3.25) {};
		\node [style=Black Vertex] (11) at (3.75, 2.5) {};
		\node [style=Black Vertex] (12) at (-2.5, -2.5) {};
		\node [style=Black Vertex] (13) at (-0.5, -3.25) {};
		\node [style=Black Vertex] (14) at (1.25, -2.25) {};
		\node [style=Black Vertex] (15) at (3.25, -3.25) {};
		\node [style=Black Vertex] (16) at (-2.5, 0.25) {};
		\node [style=Black Vertex] (17) at (-0.5, 0) {};
		\node [style=Black Vertex] (18) at (1.25, 0.5) {};
		\node [style=none] (19) at (3.75, 3.5) {$z_1$};
		\node [style=none] (20) at (3.25, -2.5) {$z_2$};
		\node [style=none] (21) at (-3.25, 0.25) {$z_3$};
		\node [style=none] (22) at (-1.25, 0) {$z_4$};
		\node [style=none] (23) at (0.5, 0.75) {$z_5$};
		\node [style=none] (24) at (-3.25, 2.75) {$a_3$};
		\node [style=none] (25) at (-0.5, 3) {$a_4$};
		\node [style=none] (26) at (2, 3) {$a_5$};
		\node [style=none] (27) at (-3, -3) {$b_3$};
		\node [style=none] (28) at (-1.25, -3.25) {$b_4$};
		\node [style=none] (29) at (1.25, -3) {$b_5$};
		\node [style=none] (30) at (-2.5, 2) {};
		\node [style=none] (31) at (-2.5, 1) {};
		\node [style=none] (32) at (-0.5, 1.75) {};
		\node [style=none] (33) at (-0.5, 1) {};
		\node [style=none] (34) at (1.25, 2) {};
		\node [style=none] (35) at (1.25, 1) {};
		\node [style=none] (36) at (-3, -1.75) {};
		\node [style=none] (37) at (-2, -1.75) {};
		\node [style=none] (38) at (-1, -1.75) {};
		\node [style=none] (39) at (0, -1.75) {};
		\node [style=none] (40) at (0.75, -1.75) {};
		\node [style=none] (41) at (1.75, -1.75) {};
	\end{pgfonlayer}
	\begin{pgfonlayer}{edgelayer}
		\draw (0.center) to (2.center);
		\draw (2.center) to (3.center);
		\draw (3.center) to (1.center);
		\draw (1.center) to (0.center);
		\draw (4.center) to (6.center);
		\draw (6.center) to (7.center);
		\draw (5.center) to (4.center);
		\draw (8) to (30.center);
		\draw (31.center) to (16);
		\draw (16) to (12);
		\draw (9) to (32.center);
		\draw (33.center) to (17);
		\draw (17) to (13);
		\draw (10) to (34.center);
		\draw (35.center) to (18);
		\draw (18) to (14);
		\draw (5.center) to (36.center);
		\draw (37.center) to (38.center);
		\draw (39.center) to (40.center);
		\draw (41.center) to (7.center);
	\end{pgfonlayer}
\end{tikzpicture}
    \caption{The rank-6 dual of $N_{(1,1,1)}$ where $\{a_3,a_4,a_5,z_1\}$ and $\{b_3,b_4,b_5,z_2\}$ are skew circuits.}
    \label{fig7.56}
\end{figure}

For the following proof, recall the definition of the class $\mathcal{N}$ given before the statement of Theorem~\ref{thm35}. Depictions of the matroid $N_{(1,1,1)}$ and its dual are given in Figures~\ref{fig7.55} and \ref{fig7.56}.

\begin{proof}[Proof of Theorem~\ref{thm35}]
    When $|Z|\leq 3$, the result follows from Corollary~\ref{lem84}. Moreover, the result holds when $|Z|=4$ by Theorem~\ref{lem91}. Let $M$ be a counterexample to the theorem that is minimal with respect to $|Z|$. Then $|Z|\geq 5$. Moreover, either
    \begin{enumerate}[label=(\roman*)]
        \item $|E(M)|>3|Z|-4$, or 
        \item $|E(M)|=3|Z|-4$, but $M\not\in \mathcal{N}\cup \mathcal{N}^*$.
    \end{enumerate}
    The first part of our argument will show that (i) does not hold. 

    Suppose that $Z$ spans both $M$ and $M^*$. Then 
    $$|E(M)| = r(M)+r^*(M) = r(Z)+r^*(Z) \leq 2|Z| < 3|Z|-4,$$
    a contradiction. Hence, by duality, we may assume that 

    \begin{sublemma}
        \label{sub1.4}
        $Z$ does not span $M^*$.
    \end{sublemma}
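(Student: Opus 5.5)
The plan is to prove that $Z$ fails to span at least one of $M$ and $M^*$, and then to use duality to arrange that it is $M^*$ that $Z$ fails to span. For the first part, I will suppose that $Z$ spans both $M$ and $M^*$. Then $r(M)=r(Z)$ and $r^*(M)=r^*(Z)$. Since $r(M)+r^*(M)=|E(M)|$, while trivially $r(Z)\le |Z|$ and $r^*(Z)\le |Z|$, this gives $|E(M)|\le 2|Z|$. As $|Z|\ge 5$, we have $2|Z|<3|Z|-4$. Hence $|E(M)|<3|Z|-4$, which contradicts both alternative (i) and alternative (ii) in the definition of a counterexample. So $Z$ does not span at least one of $M$ and $M^*$.

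The step that needs care is the reduction ``by duality''. I will check that $M^*$ is also a counterexample to Theorem~\ref{thm35} for the same set $Z$, and that it is still minimal with respect to $|Z|$. There are four points to verify.
\begin{enumerate}[label=(\alph*)]
\item By Lemma~\ref{dual}, $Z$ is $3$-connected in $M^*$.
\item $M^*$ is minor-minimal having $Z$ as a $3$-connected set. The minors of $M^*$ are exactly the duals of the minors of $M$, and Lemma~\ref{dual} applies to each such minor, so a proper minor of $M^*$ keeping $Z$ $3$-connected would give one of $M$.
\item $|E(M^*)|=|E(M)|$.
\item $M^*\in\mathcal{N}\cup\mathcal{N}^*$ if and only if $M\in\mathcal{N}\cup\mathcal{N}^*$, because this class is closed under duality.
\end{enumerate}
Thus each of alternatives (i) and (ii) transfers from $M$ to $M^*$, and the value of $|Z|$ is unchanged. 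So, replacing $M$ by $M^*$ if necessary, we may assume that $Z$ does not span $M^*$, as claimed.

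I expect no real obstacle here. The only subtlety is point (d), that the symmetric conclusion ``$M$ or $M^*$ is in $\mathcal{N}$'' in Theorem~\ref{thm35} is invariant under swapping $M$ and $M^*$, and this is immediate.
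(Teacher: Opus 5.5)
Your proposal is correct and follows the paper's argument: assuming $Z$ spans both $M$ and $M^*$ gives $|E(M)| = r(Z)+r^*(Z) \le 2|Z| < 3|Z|-4$, which contradicts both alternatives, and you then pass to $M^*$ if necessary. Your checks (a)--(d) spell out the ``by duality'' step, which the paper leaves implicit.
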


    By \ref{sub1.4}, $E-Z$ contains a circuit $K$ of $M$. Here $M\ba e$ is not 3-connected for all $e$ in $K$, otherwise $M$ is not minor minimal having $Z$ as a 3-connected set. Thus, by Theorem~\ref{thm93}, $M$ has a triad $T^*$ that meets $K$ in at least two elements. Suppose $\{a_1,a_2\} \subseteq T^*\cap K$. By Lemma~\ref{lem895} (iii) and orthogonality, each $a_i$ is in a triangle $T_{a_i}$ with two elements of $Z$. Since $|T_{a_i} \cap T^*| \neq 1$, it follows that $T^*\cap Z$ contains a single element, say $z_1$. Moreover, $z_1\in T_{a_1}\cap T_{a_2}$. Let $T_{a_1} = \{a_1,z_1,z_2\}$ and $T_{a_2} = \{a_2,z_1,z_3\}$ where $\{z_2,z_3\}\subseteq Z.$

    As co$(M\ba a_2)$ is not 3-connected, si$(M/a_2)$ is 3-connected. Now let $M_1$ be $M/a_2\ba z_1$ if $\{a_1,a_2\}$ is not in a triangle of $M$, while $M_1$ is $M/a_2\ba z_1,a_1$ if $M$ has $\{a_1,a_2\}$ in a triangle. Then $M_1$ is 3-connected since it is isomorphic to si$(M/a_2)$. Let $M_2$ be a minor of $M_1$ that is minimal having $Z-z_1$ as a 3-connected set.

    \begin{sublemma}
        \label{nom2tri}
        $M_2$ does not have a triangle containing $\{z_2,z_3\}$. 
    \end{sublemma}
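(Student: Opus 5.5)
The plan is to argue by contradiction, lifting a triangle of $M_2$ back to $M$ and using it to build a proper minor of $M$ in which $Z$ is still $3$-connected. This would contradict the minor-minimality of $M$.

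\emph{Setup.} Suppose that $M_2$ has a triangle $\{z_2,z_3,e\}$. Write $M_2 = M_1/C\ba D$, so that
$$M_2 = M/(C\cup a_2)\ba (D\cup z_1)\quad\text{or}\quad M_2 = M/(C\cup a_2)\ba (D\cup \{z_1,a_1\}),$$
according to the definition of $M_1$. Since $\{a_2,z_1,z_3\}$ is a triangle of $M$, the elements $z_1$ and $z_3$ are parallel in $M/a_2$. Since $\{a_1,z_1,z_2\}$ is a triangle, $a_1$ lies on the line of $M/a_2$ spanned by $\{z_2,z_3\}$. Consequently, in $M/(C\cup a_2)\ba D$ the line through $\{z_2,z_3\}$ contains $e$, $z_1$ (parallel to $z_3$) and, if it survives, $a_1$.

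\emph{Key steps.}
\begin{enumerate}[label=(\arabic*)]
\item Lift the triangle to $M$. The set $\{z_2,z_3,e\}$ is a circuit of $M/(C\cup a_2)$, so $M$ has a circuit $K'$ with $\{z_2,z_3,e\}\subseteq K'\subseteq \{z_2,z_3,e,a_2\}\cup C$.
\item Use orthogonality with the triad $T^*=\{a_1,a_2,z_1\}$. Since $z_1\notin C$, if $a_2\in K'$ then $K'$ must also contain $a_1$. This is possible only if $a_1\in C$. That case can be handled by noting that contracting $a_1$ together with $a_2$ makes $z_2$ parallel to $z_1$ and hence to $z_3$. This puts two elements of $Z-z_1$ in parallel in $M_2$, contradicting the fact that $Z-z_1$ is $3$-connected in $M_2$ (a parallel pair of $Z$-elements together with the two remaining $Z$-elements would exhibit a $2$-separation, since $|Z-z_1|\ge 4$). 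So we may assume $K'\subseteq \{z_2,z_3,e\}\cup C$.
\item Form the proper minor $N = M/C\ba D'$ of $M$, where $D'$ is $D$ together with $a_1$ when $a_1$ was deleted. Crucially, $N$ retains $a_2$ and $z_1$. If $C\cup D'$ is empty, then $M_2=\mathrm{si}(M/a_2)\ba z_1$ and $M_2$ is $M_1$ itself. In that case, step (2) shows that $\{z_2,z_3,e\}$ is already a triangle of $M$. Together with $\{a_1,z_1,z_2\}$ and $\{a_2,z_1,z_3\}$ this yields, via Lemma~\ref{newlem} or a $4$-point line argument as in Lemma~\ref{lem895}(iv), an element that can be removed. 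So we may assume $N\neq M$.
\item Show that $Z$ is $3$-connected in $N$. By Corollary~\ref{cor1}, it suffices to show that $N$ is connected on $Z$ and that $Z$ exhibits no $2$-separation $(X,Y)$ of $N$. Suppose such an $(X,Y)$ exists, and place $a_2$ on the side containing two of $\{a_1,z_1,z_3\}$ and $z_1$ on the side containing two of $\{a_2,z_3\}$. This is possible because of the triangles $\{a_2,z_1,z_3\}$ and $\{z_2,z_3,e\}$. Then $(X-\{a_2,z_1\},\,Y-\{a_2,z_1\})$ induces a $2$-separation of $N/a_2\ba z_1$, and hence of $M_2$ up to parallel elements, that is exhibited by $Z-z_1$. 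This contradicts the choice of $M_2$. The minimality of $M$ then gives the desired contradiction.
\end{enumerate}

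\emph{Expected obstacle.} The hard step is (4). I must check that moving $z_1$ and $a_2$ across the separation keeps $\lambda\le 1$. The triangle $\{z_2,z_3,e\}$ is exactly what should force $z_1$ (parallel to $z_3$ after contracting $a_2$) and $z_2,z_3$ to lie together. Without it, the pair $\{z_1,a_2\}$ could straddle a separation that splits $\{z_2,z_3\}$. I would first try a direct rank computation using $r(X\cup a_2)\le r(X)+1$ and the fact that $a_2\in\mathrm{cl}^*(\{a_1,z_1\})$. I would then fall back on the fan structure $(z_2,a_1,z_1,a_2,z_3)$ of $M$ to control the boundary cases.
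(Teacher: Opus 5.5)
\textbf{There is a genuine gap.} Your plan gets the contradiction from the minor-minimality of $M$ alone, by producing a proper minor in which $Z$ stays $3$-connected. But the sublemma is false for minor-minimal matroids in general.

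Take $M=M(\mathcal{W}_5)$ with $Z$ its set of spokes. This $M$ is minor-minimal: deleting a rim edge creates a $2$-vertex cut with two spokes on each side, and contracting a rim edge makes two spokes parallel. The rim is a circuit in $E-Z$. Let $a_1,a_2$ be the rim edges at a rim vertex $u$ and $z_1$ the spoke at $u$. Then $M_1=M/a_2\backslash z_1\cong M(\mathcal{W}_4)$, which is already minimal, so $M_2=M_1$. Now $\{z_2,z_3,a_1\}$ is a triangle of $M_2$.

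Your argument breaks at step (2), where you overlook the case $e=a_1$. Whenever $\{a_1,a_2\}$ lies in no triangle of $M$, the element $a_1$ survives in $M_1$, and $\{z_2,z_3,a_1\}$ is a triangle of $M_1$. Orthogonality with $\{a_1,a_2,z_1\}$ forces every lift of this triangle to contain $a_2$; for $C=\emptyset$ the lift is the $4$-circuit $\{z_2,a_1,a_2,z_3\}$. So it is never a triangle of $M$. Consequently the degenerate case $C\cup D'=\emptyset$ in step (3) is not handled, and in the example above no proper minor of $M$ works.

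Step (4) is also unsound as stated. If one side of the $2$-separation of $N$ is $\{z_1,a_2,z_3\}$, then removing $z_1$ and $a_2$ leaves a side meeting $Z-z_1$ in only one element. This really occurs: let $M$ be $M(\mathcal{W}_5)$ plus a chord joining the two rim neighbours of $u$. Then $N=M\backslash a_1$, and $\{z_1,a_2,z_3\}$ is a $2$-separating triangle of $N$ exhibited by $Z$, while $M_2\cong M(\mathcal{W}_4)$.

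\textbf{What is missing} is the hypothesis that $M$ is a counterexample, combined with induction on $|Z|$. This gives $|E(M)|\ge 3|Z|-4$, while $|E(M_2)|\le 3|Z-z_1|-4$ with equality only for members of $\mathcal{N}\cup\mathcal{N}^*$.

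The paper forms $N=P_\triangle(M(K_4),M_2)$, where $\triangle=\{z_2,z_3,t\}$ is the triangle of $M_2$ and the other triangles of $M(K_4)$ are $T_{a_1}$, $T_{a_2}$, and $\{a_1,a_2,t\}$. By Lemma~\ref{newlem}, $N\backslash t$ is $3$-connected, so minimality gives $M=N$ or $M=N\backslash t$.
\begin{enumerate}[label=(\alph*)]
\item If $M=N\backslash t$, then $|E(M)|=|E(M_2)|+2\le 3|Z|-5$, a contradiction. This is exactly the wheel situation, where nothing but counting can help.
\item If $M=N$, equality forces $M_2\in\mathcal{N}\cup\mathcal{N}^*$. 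Such matroids have no three collinear elements of $Z$, so $t\notin Z$. Then Lemma~\ref{newlem} makes $M\backslash t$ $3$-connected, contradicting minimality.
\end{enumerate}
Any correct proof has to use this counting and the inductive description of the extremal matroids, because the statement fails without them.
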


    Assume $\{z_2,z_3,t\}$ is a triangle of $M_2$. Let $N=P_\triangle(M(K_4),M_2)$ where $\triangle = \{z_2,z_3,t\}$ and the other triangles of $M(K_4)$ are $T_{a_1},$ $T_{a_2},$ and $\{a_1,a_2,t\}$. Observe that, by Lemma~\ref{newlem}, $N\ba t$ is 3-connected. 

    Suppose $\{z_2,z_3,t\}$ is a triangle of $M$. Then $N=M$. Thus
    $$3|Z|-4\leq |E(M)|=3|E(M_2)|+3\leq 3|Z-z_1|-4+3 = 3|Z|-4,$$
    so $|E(M_2)| = 3|Z-z_1|-4$. Therefore $M_2\in \mathcal{N}\cup \mathcal{N}^*$. Since no member of $\mathcal{N}\cup \mathcal{N}^*$ has three collinear members of $Z$, it follows that $t\not\in Z$. Then, by Lemma~\ref{newlem}, we obtain the contradiction that $M\ba t$ is 3-connected. 

    We deduce that $\{z_2,z_3,t\}$ is not a triangle of $M$. Thus $M = N\ba t$ and \
    $$|E(M)|=|E(M_2)|+3-1 \leq 3|Z-z_1|-4+2 \leq 3|Z|-5.$$
    This contradiction completes the proof of \ref{nom2tri}.

    We now show that

    \begin{sublemma}
        \label{next}
        $|E(M)| =3|Z|-4$ and $M_2 = M\ba a_1,z_1,a_2$, so $|E(M_2)| = 3|Z-z_1|-4$. 
    \end{sublemma}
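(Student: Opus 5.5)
The plan is to count elements between $M$ and $M_2$. By construction, $M_1$ is obtained from $M$ by contracting $a_2$ and deleting $z_1$, together with $a_1$ when $\{a_1,a_2\}$ lies in a triangle; otherwise $a_1$ survives in $M_1$. Thus $|E(M_1)|\ge |E(M)|-3$. Since $M_2$ is a minor of $M_1$, we also have $|E(M_2)|\le |E(M_1)|$. The set $Z-z_1$ is $3$-connected in $M_1$ because $M_1$ is $3$-connected (Lemma~\ref{newone}). Moreover $|Z-z_1|\ge 4$, and $M$ is a counterexample that is minimal with respect to $|Z|$. So $M_2$ satisfies Theorem~\ref{thm35}, giving $|E(M_2)|\le 3|Z-z_1|-4=3|Z|-7$.

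The key step is to show that $M$ can be rebuilt from $M_2$ by adding back only $a_1,z_1,a_2$. We have $M_2=M_1/C\ba D$ for some $C,D$ disjoint from $Z-z_1$. I would try to show that $Z$ remains $3$-connected in $M/C\ba D$ when $C$ and $D$ are chosen suitably (contracting $C$ and deleting $D$ in $M$ itself, keeping $a_1,a_2,z_1$). The structure of the triad $T^*=\{a_1,a_2,z_1\}$ and the triangles $T_{a_1}=\{a_1,z_1,z_2\}$, $T_{a_2}=\{a_2,z_1,z_3\}$ should make this reconstruction work. Concretely, $M/C\ba D$ is obtained from $M_2$ by a coextension by $a_2$ followed by extensions by $z_1$ and possibly $a_1$. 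One then checks that any $2$-separation of $M/C\ba D$ exhibited by $Z$ would induce a $2$-separation of $M_2$ exhibited by $Z-z_1$, using that $z_1$ is spanned by $\{a_1,z_2\}$ and by $\{a_2,z_3\}$. Granting this, the minimality of $M$ forces $C\cup D=\emptyset$. Hence $E(M)=E(M_2)\cup\{a_1,z_1,a_2\}$, and so $|E(M)|\le 3|Z|-7+3=3|Z|-4$.

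Since $M$ is a counterexample and (i) fails, we get $|E(M)|=3|Z|-4$. All inequalities above are then equalities: $|E(M_2)|=3|Z-z_1|-4$, and $M_2=M\ba a_1,z_1,a_2$ after the identification. In the case where $a_1$ was not deleted in forming $M_1$, equality also requires that $a_1$ be removed in passing to $M_2$. Claim~\ref{nom2tri} may be needed here to rule out the alternative in which $a_1$ persists and $M_2$ would contain the triangle $\{z_2,z_3,a_1\}$ obtained after contracting $a_2$. Noting that contracting $a_2$ and then deleting $a_2$'s neighbours agrees with deleting $\{a_1,z_1,a_2\}$ from the relevant restriction should give the equality $M_2=M\ba a_1,z_1,a_2$.

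The main obstacle is the lifting step: showing that removing $C\cup D$ in $M$ preserves the $3$-connectedness of $Z$. I would first try to handle it by translating a separation $(X,Y)$ of the lifted matroid. Put $\{a_1,a_2,z_1\}$ on the side containing $z_2$ (or $z_3$), and use the triangles $T_{a_1}$, $T_{a_2}$ and the triad $T^*$ to show that $\lambda$ does not increase when these elements are removed. This would contradict the $3$-connectedness of $Z-z_1$ in $M_2$ via Corollary~\ref{cor1}.
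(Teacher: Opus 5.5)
Your overall plan is the paper's: rebuild $M$ from $M_2$ by adjoining $a_1,z_1,a_2$, then squeeze the counts. The counting is fine. But the step you ``grant'' is the whole content of the claim, and it is not proved.

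\textbf{The target is misstated.} Minimality cannot force $C\cup D=\emptyset$. The matroid $M_1$ has a triangle $\{z_2,z_3,t\}$: here $t=a_1$ if $\{a_1,a_2\}$ is in no triangle of $M$, and otherwise $t$ is the third point of that triangle. Since $M_2$ is simple and, by \ref{nom2tri}, has no triangle on $\{z_2,z_3\}$, we always have $t\in D$. So what must be shown is $C=\emptyset$ and $D=\{t\}$. You must also exclude the second case outright. There $a_2$ is a coloop of $M\ba a_1,z_1$, so $M_1=M\ba a_1,z_1,a_2$ is a restriction of $M$. Hence $\{t,z_2,z_3\}$ is a triangle of $M$, and Lemma~\ref{newlem} makes $M\ba t$ $3$-connected, contradicting minimality. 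Your proposal never treats this case.

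\textbf{The lifting sketch breaks down where it matters.} Take a $2$-separation $(X,Y)$ of $M/C\ba(D-t)$ exhibited by $Z$, with $z_1\in X$. It restricts to a separation of $M_2$ with $\lambda\le 1$. However, $X$ may contain only one element of $Z-z_1$, for instance $X\cap Z=\{z_1,z_3\}$ with $z_2\in Y$, and then nothing is exhibited by $Z-z_1$. Whichever side you put $a_1,a_2,z_1$ on, the $X$-side keeps only $z_3$ from $Z-z_1$, so translating to $M_2$ gives no contradiction. In that case, $3$-connectivity of $M_2$ only forces $X\subseteq\{z_3,z_1,a_1,a_2\}$. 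Excluding such small $X$ needs $T_{a_1}$, $T_{a_2}$ and $T^*$ to survive in $M/C\ba(D-t)$, which you do not check. Your use of ``$z_1$ is spanned by $\{a_1,z_2\}$ and $\{a_2,z_3\}$'' in the lift silently assumes this.

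\textbf{How the paper avoids this.} It uses the generalized parallel connection. $M_1/C\ba(D-t)$ still has $\{z_2,z_3,t\}$ as a triangle; otherwise $z_2,z_3$ would be parallel in $M_2$. So $P_\triangle(M(K_4),M_1/C\ba(D-t))$ is defined, with the other triangles of $M(K_4)$ being $T_{a_1}$, $T_{a_2}$ and $\{a_1,a_2,t\}$. By Lemma~\ref{newlem}, deleting $t$ leaves a $3$-connected matroid on $E(M_2)\cup\{a_1,z_1,a_2\}$ that contains $Z$. This matroid is a minor of $M$; indeed it is your lift, via the description of $M$ as such a connection coming from Theorem~\ref{thm8}. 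Minimality of $M$ then gives equality, which yields $C=\emptyset$, $D=\{t\}$ and the count.
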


    Although $M_2$ has no triangle containing $\{z_2,z_3\}$, there is a triangle $\{z_2,z_3,t\}$ of $M_1$. If $M_2 = M_1/C\ba D$, then $t\in D$, so $t\not\in Z$. Moreover, $t\not\in E(M)$ otherwise, by Lemma~\ref{newlem}, $M\ba t$ is 3-connected. It follows that $M$ has no triangle containing $\{z_2,z_3\}$. As $M_1/C\ba (D-t)$ has $\{z_2,z_3,t\}$ as a triangle, we can obtain $M$ from $M_2$ by adjoining the elements $\{a_1,z_1,a_2\}$. Thus 
    $$3|Z|-4\leq |E(M)| = |E(M_2)|+3 \leq 3|Z-z_1| -4+3 = 3|Z|-4,$$
    that is, \ref{next} holds. 

    By the choice of $M$ and \ref{next}, $M_2\in \mathcal{N}\cup \mathcal{N}^*$. To simplify notation, for some $k\geq 2$, let $Z-z_1 = \{z^\prime, z^{\prime\prime}, x_1,x_2, \ldots, x_k\}$ and let $E(M_2)-(Z-z_1)= \{u_1,v_1,u_2,v_2, \ldots, u_k,v_k\}$. If $M_2\in \mathcal{N}$, then $Z-z_1$ spans $M_2$ and, for all $i$ in $\{1,2,\ldots,k\}$, we may assume that $M_2$ has $\{x_i,u_i,z^\prime\}$ and $\{x_i,v_i,z^{\prime\prime}\}$ as triangles and has $\{u_i,x_i,v_i\}$ as a triad. When $M_2\in \mathcal{N}^*$, we see that $M_2$ has $\{z^\prime, u_1,u_2, \ldots, u_k\}$ and $\{z^{\prime\prime}, v_1,v_2, \ldots, v_k\}$ as skew circuits whose union spans $M_2$. Moreover, $M_2$ has each $\{u_i,x_i,v_i\}$ as a triangle. 

    We know that $M$ is obtained from $M_2$ by adding elements $a_1,a_2,$ and $z_1$ where $M$ has $\{a_1,a_2,z_1\}$ as a triad, $z_1\in Z$, and $a_1,a_2\not\in Z$. Moreover, by \ref{nom2tri}, $Z-z_1$ has no elements $y_1$ and $y_2$ such that $\{z_1,a_1,y_1\}$ and $\{z_1,a_2,y_2\}$ are triangles and $\{y_1,y_2\}$ is in a triangle of $M_2$.

    \begin{sublemma}
    \label{m2n}
        $M_2\not\in \mathcal{N}$.
    \end{sublemma}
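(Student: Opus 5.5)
The plan is to assume $M_2\in\mathcal{N}$ and derive a contradiction, either with the assumption that $M\notin\mathcal{N}\cup\mathcal{N}^*$ (alternative (ii), which is now forced by \ref{next}) or with the minimality of $M$. First I would pin down the pairs of elements of $Z-z_1$ that lie in triangles of $M_2$. In each copy of $M(K_4)$ used to build $N_P$, the triangles are $\{z',z'',a_i\}$, $\{z',u_i,x_i\}$, $\{z'',v_i,x_i\}$ and $\{a_i,u_i,v_i\}$. After the generalized parallel connections along $L$ and the deletion of the $e_t$, I expect the only triangles of $M_2$ meeting $Z-z_1$ in two elements to be $\{x_i,u_i,z'\}$ and $\{x_i,v_i,z''\}$. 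This needs a short rank check: no triangle $\{z',z'',\cdot\}$ survives because every $e_t$ is deleted, and no triangle contains two $x$'s. By \ref{nom2tri}, $\{z_2,z_3\}$ is in no triangle of $M_2$. Since $z_2\neq z_3$, this leaves two cases: $\{z_2,z_3\}=\{z',z''\}$, or $\{z_2,z_3\}=\{x_i,x_j\}$ for some distinct $i,j$.

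In the first case, $M$ has the triad $\{a_1,z_1,a_2\}$ and the triangles $\{a_1,z_1,z'\}$ and $\{a_2,z_1,z''\}$. This is exactly the local structure $\{u,x,v\}$, $\{x,u,z'\}$, $\{x,v,z''\}$ of a copy of $M(K_4)$ in $N_P$, with $x=z_1$. I would apply Theorem~\ref{thm8} with $e=a_1$, $f=z_1$, $a=z'$ and $b=a_2$. This writes $M$ as $P_\triangle(M(K_4),M')\ba\{e',f'\}$, where $M'$ is $M/a_2$ with $a_1$ and $z_1$ relabelled $e'$ and $f'$. The new $K_4$ contributes one element on the line of $M'$ spanned by $\{z',z''\}$. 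That element either coincides with an existing deleted point $e_t$ of $L$ or is a new point of $L$, and correspondingly $M\cong N_{P'}$, where $P'$ is obtained from $P$ by adding $1$ to some $p_t$ or by adding a new part equal to $1$. Hence $M\in\mathcal{N}$, contradicting (ii).

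In the second case, $z_2=x_i$ and $z_3=x_j$. I would look at how the triads $\{u_i,x_i,v_i\}$ and $\{u_j,x_j,v_j\}$ of $M_2=M\ba\{a_1,z_1,a_2\}$ lift to cocircuits of $M$. Each lift lies inside $\{u_i,x_i,v_i\}\cup\{a_1,a_2,z_1\}$, and orthogonality with the triangles $\{a_1,z_1,x_i\}$ and $\{a_2,z_1,x_j\}$ restricts it to a small list of possibilities. From this I would try to find an element of $\{u_i,v_i,u_j,v_j\}$ whose deletion keeps $M$ $3$-connected, for instance via Lemma~\ref{newlem} applied to a suitable fan. Alternatively, I would find an element $e$ such that $Z$ stays $3$-connected in $M\ba e$ or in $M/e$, checked through Corollary~\ref{cor1} by showing that no $2$-separation of $M\ba e$ is exhibited by $Z$. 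Either conclusion contradicts the minimality of $M$.

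I expect this second case to be the main obstacle. The difficulty is ruling out that adding $z_1$ with $z_1\in\mathrm{cl}(\{a_1,x_i\})$ and $z_1\in\mathrm{cl}(\{a_2,x_j\})$ creates a genuinely new minimal configuration. My first attempt would be a direct rank computation on $X=\{x_i,u_i,v_i,x_j,u_j,v_j,a_1,a_2,z_1,z',z''\}$, aiming to show $\lambda_M(X)\le 1$ or that $M/ e$ (or $M\ba e$) preserves $3$-connectivity of $Z$ for some $e\in\{u_i,v_i\}$.
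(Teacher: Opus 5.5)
Your case split is the right one, and your first case agrees with the paper, which simply asserts that $\{y_1,y_2\}=\{z',z''\}$ forces $M\in\mathcal{N}$. Using Theorem~\ref{thm8} there is a sensible way to make that precise. One step is still only asserted: that the element $e'$ (namely $a_1$ in $M/a_2\ba z_1$), which lies on $\mathrm{cl}(\{z',z''\})$, either plays the role of some deleted $e_t$ or is freely placed on that line. This can be checked by determining which modular cuts of $M_2$ contain $\{z',z''\}$.

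The genuine gap is the second case, $\{z_2,z_3\}=\{x_i,x_j\}$. There you list possible strategies but give no argument, and the concrete ones you name do not work.
\begin{itemize}
\item Lemma~\ref{newlem} needs the element being deleted to lie in two triangles of an $M(K_4)$-type configuration. Each $u_i$ and $v_i$ lies in only one triangle of $M_2$, so nothing in the situation supplies such a configuration.
\item For your $11$-element set $X$ (with $k\ge 3$), $r(X)=5$, $r(E-X)=k$ and $r(M)=r(M_2)+1=k+3$. Hence $\lambda_M(X)=2$, and no $2$-separation arises.
\end{itemize}

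The missing idea is to build a $3$-connected restriction that avoids one element, by gluing two $3$-connected pieces.
\begin{itemize}
\item The set $X'=\{z_1,a_1,x_i,u_i,z',u_j,x_j,a_2\}$ has rank $4$ and carries the four triangles $\{z_1,a_1,x_i\}$, $\{x_i,u_i,z'\}$, $\{z',u_j,x_j\}$ and $\{x_j,a_2,z_1\}$. So $M|X'$ is $M(\mathcal{W}_4)$ or $\mathcal{W}^4$.
\item If $k\ge 3$, let $Y=E(M)-\{z_1,a_1,a_2,x_i,u_i,v_i\}$. Then $M|Y=M_2\ba\{x_i,u_i,v_i\}$ is again a member of $\mathcal{N}$.
\item Since $X'\cap Y$ is the triangle $\{z',u_j,x_j\}$, the Oxley--Wu result on unions of $3$-connected restrictions shows that $M|(X'\cup Y)=M\ba v_i$ is $3$-connected.
\item By Lemma~\ref{newone}, $Z$ is then $3$-connected in $M\ba v_i$, contradicting the minimality of $M$.
\end{itemize}

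When $k=2$ this fails because $M|Y$ is too small. In that case $M_2$ is $M(\mathcal{W}_4)$ or $\mathcal{W}^4$ with spokes $z',x_1,z'',x_2$ in cyclic order. A rotation swaps $\{z',z''\}$ with $\{x_1,x_2\}$, which reduces this subcase to your first case.
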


    Suppose $M_2\in \mathcal{N}$. As $M\not\in \mathcal{N}$, it follows that $\{y_1,y_2\} \neq \{z^\prime,z^{\prime\prime}\}$. By symmetry, we may assume that $(y_1,y_2)=(x_1,x_2)$. When $|Z-z_1|=4$, this case is symmetric to the case when $\{y_1,y_2\} = \{z^\prime,z^{\prime\prime}\}$. Thus we may also assume that $|Z-z_1|\geq 5$. Now let $X=\{z_1, a_1, x_1, u_1, z^\prime, u_2, x_2,\linebreak[0] a_2\}.$ Then, using the four triangles in $M|X$, it follows that this rank-4 matroid is isomorphic to $M(\mathcal{W}_4)$ or $\mathcal{W}^4$. Moreover, $M|Y\in \mathcal{N}$ where $E(M)-Y = \{z_1,a_1,a_2,x_1,u_1,v_1\}$. Since $X\cap Y = \{z^\prime,u_2,x_2\}$, it follows by a result of Oxley and Wu \cite{Wu} (see also \cite[Corollary~8.6.8]{Ox}) that $M|(X\cup Y)$ is 3-connected, that is, $M\ba v_1$ is 3-connected. This contradiction completes the proof of \ref{m2n}.

    We now know that $M_2\in \mathcal{N}^*$. By symmetry, we may assume that $y_1 = z^\prime$ or $(y_1,y_2) = (x_1,x_2)$. In the first case, since $M\ba u_i$ is not 3-connected for all $i$ and $\{u_1,u_2,\ldots,u_k,z^\prime\}$ is a circuit $D_1$, it follows by Theorem~\ref{thm93} that $M$ has a triad $T^*$ that meets $D_1$. Since $|T^*\cap D_1|\geq 2$ and $M$ has a disjoint set of triangles one containing each element of $D_1$, we obtain a contradiction to orthogonality. 

    It follows that $(y_1,y_2) = (x_1,x_2)$. Since $M\ba u_1$ and $M\ba v_1$ are not 3-connected, by Tutte's Triangle Lemma, $M$ has a triad $T^*$ that contains $u_1$. By orthogonality, $T^*$ contains an element of $D_1-u_1$ and an element of $\{v_1,x_1\}$. By orthogonality again, $T^*$ contains an element of $\{v_1,v_2,\ldots,v_k,z^{\prime\prime}\}-\{v_1\}$ or, as $x_1=y_1$, an element of $\{a_1,z_1\}$. This contradiction completes the proof of the theorem.
\end{proof}
\section*{Acknowledgements}
The authors thank Geoff Whittle for suggesting the problem considered here and for helpful initial discussions concerning it.

\end{document}